\newtheorem{thm}{Theorem}[section]
\newtheorem{lem}{Lemma}[section]
\newtheorem{prop}{Proposition}[section]
\def\Id{{\rm Id}\,}
\def\d{\partial}
\def\tilde{\widetilde}
\def\wt{\widetilde}
\newcommand{\bFormula}[1]{\begin{equation} \label{#1}}
	\newcommand{\eF}{\end{equation}}
\newcommand{\Div}{{\rm div}_x}
\newcommand{\Grad}{\nabla_x}
\newcommand{\Curl}{{\rm curl}_x}
\newcommand{\vr}{\varrho}
\newcommand{\bTheorem}[1]{\begin{Theorem} \label{T#1} }
	\newcommand{\eT}{\end{Theorem}}
\newcommand{\bProposition}[1]{\begin{Proposition} \label{P#1}}
	\newcommand{\eP}{\end{Proposition}}
\newcommand{\bLemma}[1]{\begin{Lemma} \label{L#1} }
	\newcommand{\eL}{\end{Lemma}}
\newcommand{\bCorollary}[1]{\begin{Corollary} \label{C#1} }
	\newcommand{\eC}{\end{Corollary}}
\newcommand\R{\mathbb{R}}
\newcommand{\N}{\mathbb{N}}
\newcommand{\ep}{\varepsilon}
\newcommand{\Tr}{\hbox{\rm{Tr}\,}}
\renewcommand{\div}{\mbox{\rm div}\;\!}
\def\dr{\delta\!\rho}
\def\dw{\delta\!w}
\def\cC{{\mathcal C}}
\def\cF{{\mathcal F}}
\newcommand{\with}{\quad\hbox{with}\quad}
\newcommand{\andf}{\quad\hbox{and}\quad}
\begin{document}
	
	\title
	[Global solutions for damped Euler-Maxwell equations]
	{Global solutions of Euler-Maxwell equations with dissipation}
	\author{ B. Ducomet \and \v S. Ne\v casov\' a \and J. S. H. Simon}
	
	\begin{abstract}
		We consider the Cauchy problem for a damped Euler-Maxwell system with  no ionic background.
		For smooth enough data  satisfying suitable so-called dispersive conditions, 
		we establish the global in time existence and uniqueness of a strong  solution that decays uniformly in time.
		Our method is inspired by the works  of D. Serre and M. Grassin 
		dedicated to the compressible Euler system.
	\end{abstract}
	\maketitle
	{\bf Keywords:} compressible Euler system, Maxwell, global solution, dissipation, decay.
	
	{\bf AMS subject classification:} 35Q30, 76N10.
	
	\section{Introduction}
	
	In recent years various proposal of including \textit{viscous} dissipative terms in Maxwell's equations have been made (see \cite{Ber,Mar,Ro,SL}) in order to accommodate propagation in lossy materials. 
	
	Compared to standard Maxwell's equations in lossless media, Maxwell's equations in a medium with conductor losses have numerous applications such as high-temperature plasmas, CPU electronic field or perfectly matched layers \cite{Ber}. In \cite{ME,SL,SQWS} this model leads to construction of efficient numerical methods for simulating electromagnetic dissipation.
	The possible derivation of such system from kinetic models, namely from the
	two species Vlasov-Boltzmann-Maxwell system, we refer to the work of Jang and Masmoudi, \cite{JM}. Also the hydrodynamics limits of the Boltzmann equation, see e.g. \cite{GS,LM}.
	
	In order to couple a dissipative Maxwell's system to a moving medium
	we consider the \textit{dissipative} 3D Euler-Maxwell system appearing naturally in plasma physics as a coupling between hydrodynamics and electromagnetism (see the derivation in e.g. \cite{CH}). The goal of the present paper is to study the associated Cauchy problem for this system.
	
	In fact, a number of works solve globally the Cauchy problem for the isentropic (or non isentropic) 3D Euler-Maxwell system, when a non-vanishing ionic background is present. Among them, one can quote \cite{D,GM,P,PWG,UK,UWK}. However, all of these works deal with a strictly positive ionic background, a crucial condition in order to get good estimates. 
	Hereafter, we are interested in the degenerate situation where one neglects ionic density and vacuum may appear. 
	Our aim is to establish the existence of a class of global solutions in that situation assuming some dissipation in the momentum and electro-magnetic equations (see \cite{PWG}).
	Recall that a similar issue has been investigated in the simpler situation 
	of the compressible Euler system in a series of papers  \cite{GS,S,G2} by D. Serre and M. Grassin. 
	There, it is  proved that for `well-prepared' data, the Cauchy problem for the compressible Euler system admits a unique global smooth solution.
	
	In our recent works  \cite{bddn,dd,ddbis}, we pointed out that Grassin-Serre strategy 
	was efficient to prove global existence results for the the Euler-Helmholtz, Euler-Poisson or Euler-Riesz systems.
	Our goal here is to adapt that strategy to the Euler-Maxwell system. 
	\medbreak
	After normalization, the system of equations to be studied for the fluid density $\vr = \vr (t,x)$, the charge density $\tilde\vr = \tilde\vr (t,x)$ (see \cite{I}), 
	the velocity field $u = u(t,x)$, 
	the electric field $E(t,x)$ and the magnetic field $B(t,x)$ as functions of the time $t$ and the Eulerian spatial coordinate $x \in \R^3$ reads:
	\bFormula{i1g}
	\partial_t{\vr} + \Div ({\vr} u) = 0,
	\eF
	\bFormula{i2g}
	\partial_t ({\vr} u) + \Div ({\vr} u \otimes u) + \Grad\Pi({\vr})={-\vr E+J\times B},
	\eF
	\bFormula{i3g}
	\partial_t E-\Curl B=-J-\alpha_1 E,
	\eF
	\bFormula{i4g}
	\partial_t B+\Curl E=-\alpha_2 B,
	\eF
	\bFormula{i5g}
	\Div B=0,
	\eF
	\bFormula{i6g}
	\Div E=-\tilde\vr,
	\eF
	with initial data
	\bFormula{i0g}
	(\vr,\tilde\vr,u,E,B)(0,x)=(\vr_0,\tilde\vr_0,u_0,E_0,B_0)(x),
	\eF
	where $\alpha_1$ and $\alpha_2$ 
	are given real positive parameters, $\Pi(\rho)=A\rho^\gamma$ is the barotropic pressure with $A>0$ and the adiabatic exponent $\gamma>1,$ and the electric current $J$ is given by Ohm's law:
	\bFormula{i7g}
	J=-\vr u.
	\eF
	It is known that one can discard equations \eqref{i5g} and \eqref{i6g} provided that they are satisfied by the data, namely
	\bFormula{i5gbis}
	\Div B_0=0,
	\eF
	\bFormula{i6gbis}
	\Div E_0=-\tilde\vr_0.
	\eF
	Finally we see that $\vr$ and $\tilde\vr$ are related by the compatibility relation
	\bFormula{i6gter}
	\partial_t(\vr-\tilde\vr) - \alpha_1\tilde\vr=0.
	\eF
	These conditions being assumed, 
	the reduced problem under study in the following is \eqref{i1g}--\eqref{i4g} supplemented with initial conditions  \eqref{i0g}.
	\vskip0.25cm
	Neglecting ionic background introduces the classical difficulty of vacuum as first pointed out  by Kato \cite{K} 
	when  symmetrizing the system for solving it. Despite this,  
	the corresponding Cauchy problem for Euler-Poisson with vacuum for strong solutions was solved locally in time in the eighties by various authors, among them:
	Makino  \cite{Ma}, Makino-Ukai \cite{MU}, Makino-Perthame \cite{MP},  Gamblin \cite{ga}, B\'ezard \cite{be}, Braun and Karp  \cite{BK}
	(see also \cite{Ma3} for a clear survey). 
	
	It is well known in this context that existence results are expected to be only local in time even for small data \cite{CW}
	(see blow-up results of Chemin \cite{C} (3D case) or Makino and Perthame \cite{MP} (1D spherically symmetric case)).
	However,   in  \cite{GS,G2,S}, D. Serre and M. Grassin pointed 
	out  that under a suitable   `dispersive'  spectral  condition  on the initial velocity that will be specified in the next section,
	and a smallness hypothesis on the initial density, the compressible Euler system 
	(that is  \eqref{i1g}--\eqref{i2g} with $E\equiv B\equiv0$) 
	admits a unique global smooth solution. 
	
	More recently we have shown \cite{bddn,dd,ddbis} that the Serre-Grassin global existence result extends to  the compressible Euler system coupled with the Poisson or Helmholtz equations. 
	Our goal here is to get a similar result for the whole (dissipative) Euler-Maxwell system  \eqref{i1g}--\eqref{i4g}.
	\bigbreak
	The rest of the paper is structured as follows. 
	In the next section, we state our main results and give some insights on our strategy. 
	In Section \ref{aux}, we establish decay estimates in Sobolev spaces first for 
	a  multi-dimensional Burgers-Maxwell system (which will provide us with  
	an approximate solution for our system) and, next, for  the compressible 
	Euler equation coupled with the Maxwell system.
	Section \ref{s:exist} is devoted to the proofs of  the main global existence 
	results, then we show the uniqueness of the solution.
	For the reader's convenience, some technical results like, in particular, first and second order commutator 
	estimates are recalled in the appendix. 
	\medbreak
	\noindent{\bf Notation:}
	Throughout the paper, $C$ denotes a harmless `constant' that may change from line
	to line, and we use sometimes  $A\lesssim B$ to mean that 
	$A\leq CB.$ The notation $A\approx B$  is used if  both $A\lesssim B$ and $B\lesssim A.$
	
	Finally, we shall denote by $\dot H^s$ and $H^s$ the homogeneous and nonhomogeneous
	Sobolev spaces of order $s$ on $\R^3,$ and by $W^{k,p}$ (with $k\in\N$ and $p\in[1,\infty]$)
	the set of $L^p$ functions on $\R^3,$  with derivatives up to order $k$ in $L^p.$

	\section{Main results}\label{main}
	
	Let us introduce the symmetrization introduced by T. Makino in \cite{Ma}, 
	setting
	\begin{equation}\label{eq:mak}
		\rho:=\frac{2\sqrt{A\gamma}}{\gamma-1}\,\varrho^{\frac{\gamma-1}2}.
	\end{equation}
	After that change of unknown, System \eqref{i1g}-\eqref{i4g} rewrites
	\begin{equation}\label{eq:MP}
		\left\{\begin{array}{l} 
			(\d_t+u\cdot\nabla)\rho+\frac{\gamma-1}2\rho\,\div u=0,\\[1.5ex]
			(\d_t+u\cdot\nabla)u+\frac{\gamma-1}2\rho\,\nabla\rho=-(E+u\times B)
			,\\[1.5ex]
			\partial_t E-\Curl B+\alpha_1 E=\vr u,\\[1.5ex]
			\partial_t B+\Curl E+\alpha_2 B=0.
		\end{array}
		\right.
	\end{equation}
	
	We consider the following generalized Burgers equation
	\bFormula{auxiso}
	\partial_t  v +  v \cdot \Grad  v = \Curl v\times v,
	\eF
	complemented with the damping-free Maxwell System
	\begin{equation}
		\label{MS}
		\left\{\begin{array}{l} 
			\partial_t \overline E-\Curl \overline B+\alpha_1 \overline E=0,\quad \Div \overline{E} = 0,\\[1.5ex]
			\partial_t \overline B+\Curl \overline E+\alpha_2 \overline B=0,\quad \Div \overline{B} = 0,
		\end{array}
		\right.
	\end{equation}
	with initial conditions 
	\bFormula{auxiso0}
	v(0,x)=v_0(x),
	\eF
	and 
	\bFormula{auxiso00}
	(\overline E,\overline B)(0,x)=(\overline E_0,\overline B_0)(x).
	\eF
	
	The impetus in considering such system is that, as we shall show later, \eqref{auxiso} is a good approximation of \eqref{i2g} provided that the initial density of the fluid and the initial electromagnetic field are small enough. 
	This can be done by extending the observation in \cite{G,GS} for the compressible Euler system and under suitable spectral conditions on the initial data. It is thus natural to expect that 
	system \eqref{auxiso}--\eqref{MS} is a good approximation of \eqref{eq:MP} provided that the initial density and the electromagnetic field are small.
	

	The main strategy will be to justify this heuristics in order to construct a class of global solutions to our Euler-Maxwell system. 
	\medbreak
	We consider the following function space for the analysis of the approximate equation \eqref{auxiso}:
	$$E^s:=\bigl\{z\in\cC(\R^3;\R^3),\; Dz\in L^\infty\text{ and }D^2z\in H^{s-2}\bigr\}\cdotp$$

	We now present the existence of a classical solution to equation \eqref{auxiso} given some spectral assumption on the data.  
	\begin{prop}
		\label{p:Burgers}
		Let $v_0$ be in $E^s$ with  $s>5/2$ and satisfy:
		$$
		\hbox{there exists }\ \varepsilon>0\ \hbox{ such that for any  }\  x\in\R^3,\ \hbox{\rm dist}(\hbox{\rm Sp}\,(Dv_0(x)),\R_-)\geq \varepsilon,
		\leqno\mathbf{(H0)}
		$$
		where  ${\rm Sp}\,A$ denotes the spectrum of the matrix $A$. 
		Then \eqref{auxiso} supplemented with \eqref{auxiso0} has a classical solution $v$ on $\R_+\times\R^3$ such that
		\[
		D^2v \in \cC^j\bigl(\R_+;H^{s-2-j}(\R^3)\bigr)\ \ \ \mbox{for}\ \ j=0,1.
		\]
		Moreover,  $D  v\in \cC_b(\R_+\times\R^3)$ and we have for any $t\geq 0$ and  $x\in \R^3,$
		\begin{equation}
			\label{Du}
			Dv(t,x)=\frac{1}{1+t}\ \Id +\frac{1}{(1+t)^2}\ K(t,x),
		\end{equation}
		for some function $K\in\cC_b(\R_+\times\R^3;\R^3\times\R^3)$ that also satisfies
		\begin{equation}
			\| K(t,\cdot)\|_{\dot H^\sigma}\leq K_\sigma (1+ t)^{\frac 12-\sigma},
			\label{estim2}
		\end{equation}
		for all $0<\sigma\leq s-1$. Moreover, there exists a constant $C>0$ such that $v$ satisfies the following estimates
		\begin{align}
			&\|Dv(t)\|_{L^\infty}\leq \frac{C}{1+t},\label{Duinfty}\\
			&\| v(t,\cdot)\|_{\dot H^{\sigma}}\leq C(1+ t)^{\frac 12-\sigma} , \label{Dlu}\\
			&\|D^2v(t)\|_{L^\infty}\leq \frac{C}{(1+ t)^3}, \label{estim3}
		\end{align}
		where $0<\sigma\leq s-1$.
		
	\end{prop}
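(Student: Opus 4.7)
The plan is to adapt the Grassin--Serre strategy developed for the compressible Euler system \cite{GS,S,G2} and revisited by the authors in \cite{bddn,dd,ddbis}. A key structural observation is that, by the vector identity $v\cdot\Grad v=\Grad(|v|^2/2)+(\Curl v)\times v$, equation \eqref{auxiso} is equivalent to the gradient form $\partial_t v+\Grad(|v|^2/2)=0$. Taking the curl yields $\partial_t(\Curl v)=0$, so the vorticity is frozen in Eulerian time: $\Curl v(t,\cdot)\equiv\Curl v_0$. Along the Lagrangian flow $X(t,y)$ solving $\dot X=v(t,X)$, $X(0,y)=y$, one has $\frac{d}{dt}v(t,X)=(\Curl v)\times v$, which is orthogonal to $v$, so $|v|$ is \emph{conserved along characteristics}.

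Local existence and uniqueness in $E^s$ for $s>5/2$ is standard for symmetrizable quasilinear systems: one derives energy estimates for $D^2v\in H^{s-2}$ by differentiating \eqref{auxiso} twice and applying the commutator estimates recalled in the appendix, then concludes via Friedrichs mollification and a compactness argument.

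The heart of the proof is the a priori decay. Writing $A:=Dv$ and differentiating \eqref{auxiso} in space gives $\partial_t A+(v\cdot\Grad)A+A^2=D\bigl((\Curl v)\times v\bigr)=:F$. Setting $J(t,y):=DX(t,y)$ and $B(t,y):=A(t,X(t,y))\,J(t,y)$, an elementary computation yields the coupled system $\dot J=B$ and $\dot B=FJ$. In the pure Burgers case $F\equiv 0$ this integrates to $J(t)=\Id+t\,Dv_0(y)$ and $A(t,X)=Dv_0\,(\Id+t\,Dv_0)^{-1}$, from which \eqref{Du} and \eqref{Duinfty} follow at once from assumption $\mathbf{(H0)}$. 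In our setting, $\mathbf{(H0)}$ combined with a bootstrap $\|Dv(t)\|_{L^\infty}\lesssim(1+t)^{-1}$ will ensure $\det J(t,y)\gtrsim(1+t)^3$ and $\|J^{-1}(t)\|\lesssim(1+t)^{-1}$. To control the source $F$ we exploit the two structural facts above: $\Curl v$ stays bounded in $L^\infty$ uniformly in time and $|v|$ is characteristic-constant, which allow us to bound $F$ along the flow and close a Gronwall estimate for the remainder $K$ in \eqref{Du}. The Sobolev bounds \eqref{estim2}, \eqref{Dlu} and \eqref{estim3} are then obtained by applying $D^\alpha$ for $|\alpha|\leq s-1$ to \eqref{auxiso}, using the commutator estimates of the appendix and a Gronwall argument with the $(1+t)^{-1}$ weight supplied by \eqref{Duinfty}. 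Globalization follows from the standard continuation criterion, since these bounds control precisely the norms whose blow-up would obstruct extension.

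The main obstacle will be the control of $F=D\bigl((\Curl v)\times v\bigr)$. Although $\Curl v$ is frozen in time, the factor $v$ may grow linearly in space, so one must delicately balance this spatial growth against the $(1+t)^{-1}$ gain coming from $J^{-1}$ in order to close the bootstrap. This rotational correction, absent in the purely translational Burgers analysis of \cite{GS,S}, is the genuinely new feature of the present adaptation.
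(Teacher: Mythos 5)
Your structural observations are all correct --- the gradient form $\partial_t v+\Grad(|v|^2/2)=0$, the Eulerian conservation of $\Curl v$, the conservation of $|v|$ along characteristics, and the Lagrangian system $\dot J=AJ$, $\tfrac{d}{dt}(AJ)=FJ$ with $F=D(\Curl v\times v)$. But the argument stops exactly where the proposition begins: you treat the curl term as a genuine source $F$ to be absorbed by a bootstrap, and you explicitly leave open the ``delicate balance'' needed to close it. That closure \emph{is} the content of \eqref{Du}--\eqref{estim3}. The paper takes the opposite view: the identity $v\cdot\Grad v=\Grad(|v|^2/2)+\Curl v\times v$ is used to cancel the right-hand side exactly, the Riccati equation along the flow is taken to be $A'+A^2=0$ with \emph{no} source, and the solution is then completely explicit: $X_t(y)=y+tv_0(y)$ and $Dv(t,X_t(y))=(\Id+tDv_0(y))^{-1}Dv_0(y)$, whence $K(t,\cdot)=(1+t)(\Id+tDv_0)^{-1}(Dv_0-\Id)\circ X_t^{-1}$ in closed form. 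Every estimate of the proposition is read off from this formula: \eqref{Du} and \eqref{Duinfty} from $\mathbf{(H0)}$ via $\|(\Id+tDv_0)^{-1}\|_{L^\infty}\lesssim(1+\ep t)^{-1}$, and \eqref{estim2} from the finite-difference characterization of $\|\cdot\|_{\dot H^\sigma}$ applied to the decomposition $I^1_t+I^2_t$ after the change of variables $x'=X_t^{-1}(x)$, using $\|J_{X_t}\|_{L^\infty}\lesssim(1+\ep t)^3$ and $|y'-x'|\lesssim(1+\ep t)^{-1}|X_t(y')-X_t(x')|$. None of this explicit machinery appears in your proposal, and no soft Gronwall argument substitutes for it.

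The gap is not a technicality. Even if your bootstrap could be closed to give $\|Dv\|_{L^\infty}\lesssim(1+t)^{-1}$, that is strictly weaker than \eqref{Du}, which asserts the exact leading coefficient $\frac{1}{1+t}\,\Id$ with a \emph{bounded} remainder of size $(1+t)^{-2}$; this exact coefficient (and the resulting $\div v=\frac{3}{1+t}+O((1+t)^{-2})$) is precisely what later produces the damping $c_{\gamma,s}/(1+t)$ in the energy estimates for System \eqref{eq:MP}, and it cannot emerge from a perturbative Gronwall bound with an uncontrolled source $F$ (note $\dot B=FJ$ with $J$ growing linearly, so you would need $F$ to decay faster than $t^{-2}$ just to make $B$ converge, before even identifying the limit as $\Id$ up to the stated error). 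Likewise the $\sigma$-dependent rate $(1+t)^{1/2-\sigma}$ in \eqref{estim2} is extracted in the paper from the closed-form expression for $K$. So you must either justify that the source term genuinely drops out of the Riccati equation --- which is what the paper's use of the vector identity is designed to achieve, and which is the one idea your proposal identifies as an ``obstacle'' rather than resolves --- or produce a quantitative argument yielding these exact asymptotics. The proposal does neither.
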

	
	We mention that the proposition above has been established --- in the case where there right-hand side of \eqref{auxiso} is absent --- in \cite{G,GS} for integer regularity exponents, while X. Blanc, et al. \cite{bddn} provided the proof for \emph{real} exponents.
	
	Concerning the damping-free electromagnetic system we can establish an exponential decay of its solution as shown in the lemma below.
	\begin{lem}
		\label{EMfree}
		Assume that $\overline B_0,\overline E_0\in L^2$.
		Let $(\overline E,\overline B)$ be the unique solution of the Cauchy problem ~\eqref{MS}-\eqref{auxiso00}.
		Then $\overline E,\overline B\in L^2$ and $E$ satisfies the estimate
		\bFormula{Elec0}
		\|\overline E\|_{L^2}^2+\|\overline B\|_{L^2}^2\leq C_0^2e^{-\alpha t},
		\eF
		with $C_0^2=\frac{1}{2}(\|\overline E_0\|_{L^2}^2+\|\overline B_0\|_{L^2}^2)$ and $
		\alpha=\min\{\alpha_1,\alpha_2\}$.
	\end{lem}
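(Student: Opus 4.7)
The plan is a direct $L^2$ energy estimate on the coupled linear system \eqref{MS}, exploiting the fact that the curl-coupling between $\overline E$ and $\overline B$ is formally skew-adjoint on $L^2$, so the cross-terms cancel after summation and only the contribution of the damping terms survives.

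Concretely I would take the $L^2$ inner product of the first equation of \eqref{MS} with $\overline E$, of the second with $\overline B$, and sum. Using the identity
\[
\int_{\R^3}\Curl \overline B\cdot\overline E\,dx=\int_{\R^3}\overline B\cdot\Curl \overline E\,dx,
\]
valid for $L^2$ fields with $L^2$ curl (and justified by testing against smooth compactly supported functions when necessary), the coupling terms cancel and I obtain
\[
\frac12\frac{d}{dt}\bigl(\|\overline E\|_{L^2}^2+\|\overline B\|_{L^2}^2\bigr)+\alpha_1\|\overline E\|_{L^2}^2+\alpha_2\|\overline B\|_{L^2}^2=0.
\]
Bounding the dissipation from below by $\alpha\bigl(\|\overline E\|_{L^2}^2+\|\overline B\|_{L^2}^2\bigr)$ with $\alpha=\min\{\alpha_1,\alpha_2\}$ and applying Gronwall then delivers the claimed exponential decay of $\|\overline E\|_{L^2}^2+\|\overline B\|_{L^2}^2$.

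Existence and uniqueness in $\cC(\R_+;L^2)$ come for free: \eqref{MS} is a linear constant-coefficient first-order system whose principal part is generated by the skew-adjoint operator $(\overline E,\overline B)\mapsto(-\Curl \overline B,\Curl \overline E)$ on $L^2\times L^2$, perturbed by a bounded accretive multiplication operator. Hille--Yosida (or, equivalently, an explicit Fourier representation, since the coefficients are constant on $\R^3$) produces the corresponding contraction semigroup, and a Friedrichs mollification argument propagates the energy identity to general $L^2$ data.

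No serious obstacle arises; the only mildly subtle point is justifying the integration by parts for merely $L^2$ fields, which is handled by approximating the initial data in $L^2$ by Schwartz functions, running the computation in the smooth setting where all quantities are classical, and passing to the limit by strong continuity of the semigroup with respect to the data.
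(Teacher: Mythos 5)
Your proposal is correct and takes essentially the same route as the paper: an $L^2$ energy identity in which the curl coupling cancels (the paper phrases this as the vanishing integral of $\div(\overline E\times \overline B)$), a lower bound on the dissipation by $\alpha=\min\{\alpha_1,\alpha_2\}$, and Gronwall. Your differential form of the estimate in fact yields the rate $e^{-2\alpha t}$ with constant $\|\overline E_0\|_{L^2}^2+\|\overline B_0\|_{L^2}^2$, which is at least as strong as the stated bound for large $t$, and the additional functional-analytic care you take with the semigroup construction and the justification of the integration by parts for merely $L^2$ fields, while absent from the paper, is sound.
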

	\begin{proof}
		Multiplying the first equation (\ref{MS}) by $E$ and the second one by $B$, we get the energy equality
		$$\frac{1}{2}\frac{d}{dt}\frac{1}{2} (B^2+ E^2)+\alpha_1 E^2+\alpha_2B^2
		+\div (E\times B)=0.
		$$
		Integrating on $[0,t]\times {\mathbb R}^3$ we get
		$$\int_{{\mathbb R}^3} \frac{1}{2} B^2 dx
		+\frac{1}{2} \int_{{\mathbb R}^3} E^2\ dx+\int_0^t\int_{{\mathbb R}^3} \left(\alpha_1E^2(s)+\alpha_2B^2(s)\right)\ dx\ ds
		=\int_{{\mathbb R}^3} \left(\frac{1}{2} B_0^2+\frac{1}{2} E_0^2\right) dx,$$
		which gives the inequality
		$$\frac{1}{2} \int_{{\mathbb R}^3} E^2\ dx+\alpha\int_0^t\int_{{\mathbb R}^3} (E^2(s)+B^2(s))\ dx\ ds\leq C_0^2.$$
		Using Gronwall's inequality we get (\ref{Elec0}).
	\end{proof}
	\vskip0.25cm
	Going back to system ~\eqref{i1g}-\eqref{i7g}, we can derive an analogous $L^2$ bound for the actual electric field.
	
	\begin{lem}
		\label{Elecmag}
		Let $T>0$  be arbitrary. 
		Assume that $\sqrt{\vr_0} u_0,B_0,E_0\in L^2$ and $\vr\in L^\gamma$.
		Let $(\vr,u,E,B)$ be a solution of the Cauchy problem ~\eqref{i1g}-\eqref{i7g}.
		
		Then $E\in L^2$ and satisfies the estimate
		\bFormula{Elec}
		\|E\|_{L^2}^2+\|B\|_{L^2}^2\leq C_1^2e^{-\alpha t},
		\eF
		with $C_1^2=\|\sqrt{\vr_0} u_0\|_{L^2}^2+\frac{A\gamma}{\gamma-1}\|\vr_0\|_{L^\gamma}^\gamma+\frac{1}{2}(\|E_0\|_{L^2}^2+\|B_0\|_{L^2}^2)$.
	\end{lem}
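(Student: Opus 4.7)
The plan is to extend the argument of Lemma \ref{EMfree} to the coupled system by deriving a \emph{total} energy identity that combines the mechanical and electromagnetic contributions, exploiting the key fact that the Lorentz force does no work against the fluid velocity. The resulting balance will have its fluid/field coupling terms cancel exactly, leaving the Maxwell dissipation alone to drive the decay.

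First I would derive the mechanical balance. Rewriting \eqref{i2g} in non-conservative form as $\vr(\partial_t u + u\cdot\Grad u) + \Grad\Pi(\vr) = -\vr E + J\times B$, taking the inner product with $u$, and integrating over $\R^3$, I use $J=-\vr u$ to obtain the pointwise identity $u\cdot(J\times B) = -\vr\,u\cdot(u\times B)\equiv 0$, so the magnetic work drops out. Combining with \eqref{i1g} in the standard way (namely, $\int\vr(\partial_t+u\cdot\Grad)f\,dx = \frac{d}{dt}\int\vr f\,dx$) and integrating by parts in $\int u\cdot\Grad\Pi(\vr)\,dx$ with $\Pi(\vr)=A\vr^\gamma$ yields
\begin{equation*}
\frac{d}{dt}\int_{\R^3}\left(\tfrac{1}{2}\vr|u|^2 + \tfrac{A}{\gamma-1}\vr^\gamma\right)dx = -\int_{\R^3}\vr\,u\cdot E\,dx.
\end{equation*}

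Next, testing \eqref{i3g} against $E$ and \eqref{i4g} against $B$, adding, and using $\Div(E\times B) = B\cdot\Curl E - E\cdot\Curl B$, I get
\begin{equation*}
\frac{d}{dt}\int_{\R^3}\tfrac{1}{2}(|E|^2+|B|^2)\,dx + \alpha_1\|E\|_{L^2}^2 + \alpha_2\|B\|_{L^2}^2 = -\int_{\R^3} J\cdot E\,dx = \int_{\R^3}\vr\,u\cdot E\,dx.
\end{equation*}
Summing the two balances, the coupling terms $\pm\int\vr\,u\cdot E\,dx$ cancel, producing the total energy identity
\begin{equation*}
\frac{d}{dt}\cE(t) + \alpha_1\|E\|_{L^2}^2 + \alpha_2\|B\|_{L^2}^2 = 0,
\end{equation*}
with $\cE(t) := \tfrac{1}{2}\|\sqrt{\vr}\,u\|_{L^2}^2 + \tfrac{A}{\gamma-1}\|\vr\|_{L^\gamma}^\gamma + \tfrac{1}{2}(\|E\|_{L^2}^2+\|B\|_{L^2}^2)$ and $\cE(0) \leq C_1^2$.

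From here I would integrate in time, discard the non-negative mechanical parts of $\cE(t)$ on the left, and bound $\alpha_1\|E\|_{L^2}^2+\alpha_2\|B\|_{L^2}^2\geq\alpha(\|E\|_{L^2}^2+\|B\|_{L^2}^2)$, arriving at an integral inequality of the same form as the one in Lemma \ref{EMfree}, from which Gronwall's lemma yields \eqref{Elec}. The only genuinely new ingredient is the pointwise identity $u\cdot(u\times B)=0$, which produces the exact cancellation of the magnetic work and allows the Maxwell dissipation to act effectively on the full (mechanical plus electromagnetic) energy; once that is isolated, everything reduces to the energy method already used for Lemma \ref{EMfree}. The only minor technical point is the justification of the integration by parts in $\int u\cdot\Grad\Pi(\vr)\,dx$, which is routine under the stated assumptions $\sqrt{\vr_0}\,u_0\in L^2$ and $\vr_0\in L^\gamma$, with a standard cutoff argument if needed to avoid boundary contributions at infinity.
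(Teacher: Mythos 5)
Your proposal is correct and follows essentially the same route as the paper: both derive the combined mechanical--electromagnetic energy identity (the paper writes the local energy equality directly, while you split it into the two balances and make explicit the cancellations $u\cdot(J\times B)=0$ and $-\int\vr\,u\cdot E\,dx+\int\vr\,u\cdot E\,dx=0$), then integrate, discard the non-negative fluid terms, and close with the same Gronwall step. The only difference is presentational; no new idea is involved.
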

	\begin{proof}
		Multiplying (\ref{i2g}) by $u$, (\ref{i3g}) by $E$ and (\ref{i4g}) by $B$, we get the energy equality
		$$\frac{d}{dt}\left(\frac{1}{2}\vr u^2+\frac{\Pi}{\gamma-1}\right)
		+\frac{1}{2}\frac{d}{dt} (E^2+ B^2)+\alpha_1E^2+\alpha_2 B^2$$
		$$+\,\div \left(\left(\vr u^2+\frac{\gamma\Pi}{\gamma-1}\right) u+E\times B\right)=0.
		$$
		Integrating on $[0,t]\times {\mathbb R}^3$ we get
		$$\int_{{\mathbb R}^3} \left(\frac{1}{2}\vr u^2+\frac{\Pi}{\gamma-1}\right) dx
		+\frac{1}{2} \int_{{\mathbb R}^3} (E^2+B^2)\ dx+\int_0^t\int_{{\mathbb R}^3} \left(\alpha_1E^2(s)+\alpha_2B^2(s)\right)\ dx\ ds$$
		$$=\int_{{\mathbb R}^3} \left(\frac{1}{2}\vr_0 u_0^2+\frac{\Pi_0}{\gamma-1}+\frac{1}{2} B_0^2+\frac{1}{2} E_0^2\right) dx,$$
		which gives the inequality
		$$\frac{1}{2} \int_{{\mathbb R}^3} (E^2+B^2)\ dx+\alpha\int_0^t\int_{{\mathbb R}^3} (E^2(s)+B^2(s))\ dx\ ds\leq C_1^2.$$
		Using Gronwall's inequality we obtain (\ref{Elec}).
	\end{proof}
	
	\vskip0.5cm
	
	Our main result is proving the following global existence and uniqueness of solution to System~\eqref{i1g}-\eqref{i4g}. 
	\begin{thm}
		\label{isentro}
		Suppose that either $1<\gamma<5/3$ and $5/2<s<3/2+2/(\gamma-1)$ or $5/2<s<+\infty$ if $\gamma=1+2/k$ for some integer $k$. 
		
		Assume that the initial data $(\rho_0,u_0,E_0,B_0)$ satisfy:
		\begin{itemize}
			\item $\mathbf{(H1)}$ there exists $v_0$ in $E^{s+1}$ satisfying  $\mathbf{(H0)}$
			and such that $u_0-v_0$ is small in $H^s$;
			\item $\mathbf{(H2)}$ $\vr_0^{\frac{\gamma-1}{2}}$ is small enough in $H^s.$
			\item $\mathbf{(H3)}$ $E_0$ and $B_0$ are small enough in $H^s.$
			\item $\mathbf{(H4)}$ Conditions \eqref{i5gbis}, \eqref{i6gbis} and $B_0=\Curl u_0$ are satisfied.
		\end{itemize}
		If $(v,\overline E,\overline B)$ is the global solution of \eqref{auxiso}--\eqref{MS} with initial data \eqref{auxiso0}--\eqref{auxiso00} from Proposition \ref{p:Burgers} and Lemma \ref{EMfree}, then there exists a unique global solution $\left(\vr,u,E,B\right)$ to
		\eqref{i1g}--\eqref{i0g}, such that 
		\[
		\bigl(\vr^{\frac{\gamma-1}{2}},u- v,E-\overline E,B-\overline B\bigr) \in
		\cC\bigl(\R_+;H^{s}\bigr).
		\]
	\end{thm}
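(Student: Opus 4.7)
The plan is to follow the Serre-Grassin strategy developed by the authors in \cite{bddn,dd,ddbis} for the Euler-Poisson, Euler-Helmholtz and Euler-Riesz systems. Starting from the Makino-symmetrized form \eqref{eq:MP} with $\rho=\tfrac{2\sqrt{A\gamma}}{\gamma-1}\vr^{(\gamma-1)/2}$, I would introduce the perturbed unknowns $\tilde u:=u-v$, $\tilde E:=E-\overline E$, $\tilde B:=B-\overline B$ where $(v,\overline E,\overline B)$ is the global reference solution provided by Proposition \ref{p:Burgers} and Lemma \ref{EMfree}. Subtracting \eqref{auxiso} and \eqref{MS} from the corresponding lines of \eqref{eq:MP}, I obtain a quasilinear symmetric hyperbolic system for $(\rho,\tilde u)$ coupled to a damped Maxwell system for $(\tilde E,\tilde B)$, whose source terms involve: (i) the variable-coefficient convection $v\cdot\nabla$ and the stretching $\tilde u\cdot\nabla v$; (ii) quadratic and cubic perturbation nonlinearities, in particular the Lorentz force $-(v+\tilde u)\times(\tilde B+\overline B)$; (iii) forcing built from $v,\overline E,\overline B$ whose decay is controlled by Proposition \ref{p:Burgers} and Lemma \ref{EMfree}. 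The task is then to show that $(\rho,\tilde u,\tilde E,\tilde B)$ remains bounded in $\cC(\R_+;H^s)$ under the smallness hypotheses (H1)--(H3).

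Local existence in $\cC([0,T^*);H^s)$ follows from a standard Kato-type argument since the Euler block is symmetric hyperbolic and the Maxwell block is linear with smooth source, so it suffices to rule out finite-time blow-up. To that end, I would introduce the energy functional
$$\cE(t):=\|\rho(t)\|_{H^s}^2+\|\tilde u(t)\|_{H^s}^2+\|\tilde E(t)\|_{H^s}^2+\|\tilde B(t)\|_{H^s}^2,$$
and aim for a differential inequality of the form
$$\tfrac{d}{dt}\cE(t)+\alpha\bigl(\|\tilde E\|_{H^s}^2+\|\tilde B\|_{H^s}^2\bigr)\leq \frac{C}{1+t}\,\cE(t)+C\sqrt{\cE(t)}\,\cE(t)+\cF(t),$$
with $\cF$ integrable in time. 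The $1/(1+t)$ amplification arises from commutators of the form $[\partial^\alpha,v\cdot\nabla]\tilde u$ and $[\partial^\alpha,v\cdot\nabla]\rho$, bounded by the first- and second-order commutator estimates recalled in the appendix together with the decay $\|Dv(t)\|_{L^\infty}\lesssim 1/(1+t)$ from \eqref{Duinfty}. The Maxwell damping provides the coercive bracketed term, the nonlinear remainder $\cE^{3/2}$ is absorbed by smallness of the data, and $\cF$ is controlled by the $\dot H^\sigma$ decay of $v$ in \eqref{estim2} combined with the exponential decay \eqref{Elec0} for $(\overline E,\overline B)$. Under (H1)--(H3), a bootstrap then yields at most polynomial growth $\cE(t)\lesssim \cE(0)(1+t)^{C\sqrt{\cE(0)}}$, which by continuation extends the solution to all of $\R_+$.

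The hardest part will be the top-order energy estimate: the only taming factor for $\|\rho\|_{H^s}$ and $\|\tilde u\|_{H^s}$ is the non-integrable weight $1/(1+t)$, so every commutator exceeding this scale must be reorganised to make either the better decay $\|D^2v\|_{L^\infty}\lesssim(1+t)^{-3}$ from \eqref{estim3} appear, or to be absorbed into the Maxwell damping. The Lorentz coupling requires a careful energy identity: the null structure $\tilde u\cdot(\tilde u\times\tilde B)=0$ kills the most dangerous cubic piece, while the compatibility $B_0=\Curl u_0$ in (H4) ensures that the linear source $v\times\overline B$ decays consistently, allowing the Maxwell block to close. The range of $\gamma$ and $s$ in the statement is precisely what allows the quadratic nonlinearity $\tfrac{\gamma-1}{2}\rho\nabla\rho$ and the power-type relation between $\vr$ and $\rho$ to be controlled through Kato-Ponce/Moser estimates. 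Once the a priori bound is secured, uniqueness follows from an $L^2$ energy estimate on the difference of two solutions with identical data, in which the symmetric-hyperbolic structure of the Euler block, the skew-adjointness of $\Curl$, and the Maxwell damping $\alpha_1,\alpha_2>0$ cooperate to absorb all coupling terms via Gronwall.
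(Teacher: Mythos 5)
Your overall setup (perturbing around the Burgers--Maxwell reference solution, local existence by Kato plus continuation, closing via an $H^s$ energy estimate) matches the paper's, but the energy inequality you aim for cannot close, and it misses the central mechanism of the Grassin--Serre strategy. You place the term $\frac{C}{1+t}\,\cE$ on the \emph{right-hand side} as an amplification coming from the commutators $[\partial^\alpha, v\cdot\nabla]$, and then claim polynomial growth $\cE(t)\lesssim\cE(0)(1+t)^{C\sqrt{\cE(0)}}$ with the cubic term ``absorbed by smallness''. This is not a valid closure: if $\cE$ is only allowed to grow, then $\sqrt{\cE}$ is not time-integrable, and the Riccati-type term $C\cE^{3/2}$ forces finite-time blow-up of the bound no matter how small $\cE(0)$ is --- smallness only delays the blow-up time. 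The whole point of hypothesis $\mathbf{(H0)}$ is that $Dv=\frac{1}{1+t}\Id+O((1+t)^{-2})$, so the stretching term $\int w\,Dv\,w$ and the terms $\rho\,\div v$, together with the \emph{leading part of the commutator} $[v,\dot\Lambda^s]\nabla(\cdot)$ (the Poisson bracket $-s\,\partial_jv^k\dot\Lambda^{-2}\partial^2_{jk}$, which the paper subtracts off explicitly in the definition of $\dot R^3_s,\dot R^6_s$ using the second-order commutator estimate of Lemma \ref{l:com1}), all appear on the \emph{left} with the good sign $+\frac{c_{\gamma,s}}{1+t}$, $c_{\gamma,s}=c_\gamma+s$. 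After the change of unknown $\dot Y_s=(1+t)^{c_{\gamma,s}-a}\dot X_s$ this yields genuine algebraic \emph{decay} $\|(\rho,w,e,b)\|_{\dot H^s}\lesssim(1+t)^{-s-c_\gamma}$, which is what makes $\|\nabla w\|_{L^\infty}\lesssim \dot X_0^{1-5/(2s)}\dot X_s^{5/(2s)}$ time-integrable and lets the bootstrap \eqref{eq:Z2}--\eqref{eq:Bound} close. Without extracting this decay you have no integrable coefficient in front of $\cE$ and no way to control the nonlinearity globally.

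Two further points. First, the restrictions $1<\gamma<5/3$ and $5/2<s<3/2+2/(\gamma-1)$ do not come generically from ``Kato--Ponce/Moser'' as you suggest: the upper bound on $s$ is forced by the composition estimate \eqref{eq:Inegs} for $\rho^{2/(\gamma-1)}$ (needed for the Ohm's-law source $\dot R^8_s$ in the Maxwell block), and $\gamma<5/3$ is exactly the condition under which the exponents $\Gamma_i$ in \eqref{eq:constraint} exceed one, i.e.\ under which the weights in the bootstrap are integrable. Second, $\mathbf{(H4)}$ is not used to make $v\times\overline B$ decay; the paper observes (following Germain--Masmoudi) that $B-\Curl u$ is transported by the flow, so $B_0=\Curl u_0$ gives $B=\Curl u$ for all time, and the Lorentz force $u\times B=u\times\Curl u$ then combines with $v\times\Curl v$ into commutator-type differences that Lemma \ref{l:com1} can handle. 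Your ``null structure'' remark does not substitute for this identity.
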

	As a consequence of the theorem above, we can show that the constructed global solution of \eqref{i1g}--\eqref{i0g} satisfies a decay estimate towards the solution of \eqref{auxiso}--\eqref{MS}.
	\begin{thm}\label{decay}
		Let all the assumptions of  Theorem \ref{isentro} be in force.  Then, for all $\sigma$ in $[0,s],$
		the solution $(\vr,u,E,B)$ constructed therein satisfies
		$$
		\left\|\vr^{\frac{\gamma-1}{2}},u- v,E-\overline E,B-\overline B\right\|_{\dot H^\sigma}\leq C_\sigma (1+t)^{3/2-\sigma-\min\{1,3/2(\gamma-1)\}},
		$$
		where $C_\sigma$ depends only on the initial data, on $\gamma$
		and on $\sigma.$ 
	\end{thm}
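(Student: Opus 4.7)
The plan is to perform weighted $\dot H^\sigma$ energy estimates on the system satisfied by the difference quadruple
$(\rho,\,w,\,\tilde E,\,\tilde B) := \bigl(\tfrac{2\sqrt{A\gamma}}{\gamma-1}\vr^{(\gamma-1)/2},\;u-v,\;E-\overline E,\;B-\overline B\bigr)$, extending the a priori estimates of Theorem~\ref{isentro} by now tracking the time-growth of the weights explicitly. Subtracting \eqref{auxiso}--\eqref{MS} from the Makino form \eqref{eq:MP} one obtains
\begin{align*}
(\d_t+u\cdot\nabla)\rho+\tfrac{\gamma-1}{2}\rho\,\Div u &= 0,\\
(\d_t+u\cdot\nabla)w+(w\cdot\nabla)v+\tfrac{\gamma-1}{2}\rho\,\Grad\rho &= -\tilde E - u\times\tilde B - \overline E - u\times\overline B - \Curl v\times v,\\
\d_t\tilde E-\Curl\tilde B+\alpha_1\tilde E &= \vr\,u,\\
\d_t\tilde B+\Curl\tilde E+\alpha_2\tilde B &= 0,
\end{align*}
whose crucial feature is the Grassin--Serre amplification term $(w\cdot\nabla)v$: by \eqref{Du}, $Dv = (1+t)^{-1}\Id + O((1+t)^{-2})$ in $L^\infty$, so pairing this term with $w$ in $L^2$ produces an effective $(1+t)^{-1}$-damping on $w$.

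Applying $\Lambda^\sigma:=(-\Delta)^{\sigma/2}$ to each equation and forming the symmetric energy
$\mathcal E_\sigma(t) := \tfrac{1}{2}\bigl(\|\Lambda^\sigma\rho\|_{L^2}^2+\|\Lambda^\sigma w\|_{L^2}^2+\|\Lambda^\sigma\tilde E\|_{L^2}^2+\|\Lambda^\sigma\tilde B\|_{L^2}^2\bigr)$, the symmetric hyperbolic structure of the $(\rho,w)$ block (cancellation of the two pressure couplings up to a commutator) together with the antisymmetry of $\Curl$ in the electromagnetic block yields, after the standard $L^2$ argument, a differential inequality of the form
$$
\tfrac{d}{dt}\mathcal E_\sigma + \tfrac{2}{1+t}\|\Lambda^\sigma w\|_{L^2}^2 + 2\alpha\bigl(\|\Lambda^\sigma\tilde E\|_{L^2}^2+\|\Lambda^\sigma\tilde B\|_{L^2}^2\bigr) \leq \tfrac{C_\sigma}{1+t}\mathcal E_\sigma + \mathcal R_\sigma(t),
$$
where $C_\sigma/(1+t)$ is the transport loss generated by $\tfrac{1}{2}\Div u\sim\tfrac{3}{2(1+t)}$ and $\mathcal R_\sigma$ collects commutators, the pressure correction $\int\Grad\rho\cdot\Lambda^\sigma w\,\Lambda^\sigma\rho$, and forcing terms involving $v$, $\overline E$, $\overline B$ and $\vr u$. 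These remainders are handled via the Moser and commutator estimates recalled in the appendix, using the $L^\infty$ decay $\|Dv\|_{L^\infty}\lesssim (1+t)^{-1}$ and $\|\rho\|_{L^\infty}\lesssim (1+t)^{-3(\gamma-1)/2}$ (the latter deduced from the continuity equation and $\Div v\sim 3/(1+t)$), together with the exponential decay of $(\overline E,\overline B)$ from Lemma~\ref{EMfree}.

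Multiplying the resulting inequality by the time-weight $(1+t)^{2\sigma+2\min\{1,3(\gamma-1)/2\}-3}$, chosen precisely so that its derivative is absorbed by the effective damping on the left, and invoking Gronwall yields $(1+t)^{2\sigma+2\min\{1,3(\gamma-1)/2\}-3}\mathcal E_\sigma(t) \leq C_\sigma$, which is exactly the stated estimate. The main obstacle is to ensure that the pressure coupling $\rho\Grad\rho$ and the Maxwell source $\vr u$ remain time-integrable in the weighted norm: both depend on the pointwise decay of the density, and it is precisely this constraint that fixes the threshold $\min\{1,3(\gamma-1)/2\}$ --- when $\gamma\geq 5/3$ the Burgers-driven dispersion dominates and yields the sharper rate $1$, whereas for $\gamma<5/3$ the slower pointwise rate $\|\rho\|_{L^\infty}\lesssim (1+t)^{-3(\gamma-1)/2}$ bottlenecks the argument. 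A secondary difficulty is carrying the estimate uniformly down to $\sigma=0$, where the commutator estimates become borderline and one has to interpolate with the uniform high-regularity bound provided by Theorem~\ref{isentro}.
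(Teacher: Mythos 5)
Your overall strategy---weighted $\dot H^\sigma$ energy estimates on the difference system \eqref{eq:BB}, with the effective $(1+t)^{-1}$ damping coming from $Dv=(1+t)^{-1}\Id+O((1+t)^{-2})$---is indeed the paper's strategy, and your exponent bookkeeping ($c_\gamma=\min\{1,\tfrac32(\gamma-1)\}-\tfrac32$ in the paper's notation) is consistent with the stated rate. Note, though, that the paper does not rerun the estimate at each $\sigma$: it proves the two endpoint bounds $\sigma=0$ and $\sigma=s$ simultaneously inside the proof of Theorem \ref{isentro} (this is exactly \eqref{eq:poissonHs}) and then gets all intermediate $\sigma$ by the interpolation $\|f\|_{\dot H^\sigma}\le\|f\|_{L^2}^{1-\sigma/s}\|f\|_{\dot H^s}^{\sigma/s}$, which sidesteps the low-regularity commutator issue you flag at the end.

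There are two genuine gaps in your argument. First, and most seriously, you treat the Lorentz term $u\times B$ (and $u\times\overline B$) as a generic forcing to be absorbed by Moser estimates and the exponential decay of $(\overline E,\overline B)$. This cannot work as written: under $\mathbf{(H0)}$ the profile $v$ behaves like $x/(1+t)$ at spatial infinity, so $u=v+w$ is \emph{not} in $L^\infty$ and a bound of the type $\|u\times B\|_{L^2}\le\|u\|_{L^\infty}\|B\|_{L^2}$ is unavailable. The paper's way out is structural: hypothesis $\mathbf{(H4)}$ ($B_0=\Curl u_0$) together with the Germain--Masmoudi observation that $B-\Curl u$ is transported by the flow gives $B=\Curl u$ for all time (see \eqref{eq:Bw}), after which $u\times B-v\times\Curl v$ is rewritten via the identity $\tfrac12\nabla|w|^2=w\times\Curl w+w\cdot\nabla w$ so that only differentiated quantities ($Dv$, $\Curl w$, $\div v$) multiply the unknowns. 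Your proposal never uses $\mathbf{(H4)}$, so this cancellation is missing. Second, the closing step is not a linear Gronwall argument: after interpolation the right-hand side contains superlinear powers of the weighted norm (of orders $2$, $\tfrac2{\gamma-1}-1$, $\tfrac2{\gamma-1}$, $\tfrac2{\gamma-1}+1$; see \eqref{eq:Ineg}), and the decay rate you are trying to prove is exactly what makes these terms time-integrable. The paper resolves this circularity by a bootstrap (assume $Z(t)\le 2Z_0$ on $[0,T]$, use smallness of the data as in \eqref{eq:Bound} to conclude $Z(t)<2Z_0$), and it is this step that produces the restriction $\gamma<5/3$; ``invoking Gronwall'' conceals the place where the smallness hypotheses $\mathbf{(H1)}$--$\mathbf{(H3)}$ actually enter.
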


	
	\section{Decay estimates in Sobolev spaces}\label{aux}
	
	In this section we prove \textit{a priori} decay estimates in Sobolev spaces which play a fundamental role in the proof of our global existence result. To be specific, we first establish a decay estimate for the generalized Burgers equation \eqref{auxiso}. Secondly, we compare the solutions to \eqref{eq:MP} and to \eqref{auxiso} giving us the estimate promised in Theorem \ref{decay}.
	
	\subsection{Decay estimates for the  generalized Burgers equation} 
	
	The purpose of this part is to prove Proposition \ref{p:Burgers} for  any  real regularity  exponent $s>5/2.$ 
	\smallbreak
	1. Using the identity $v\cdot\Grad v=\Grad \frac{v^2}{2}+\Curl v\times v$ in equation (\ref{auxiso}) we get
	$$\partial_t v+\Grad \frac{v^2}{2}
	=0.$$
	
	By letting $X$ be the flow of $v$, we see that the matrix valued function $A:(t,y)\mapsto Dv(t,X(t,y))$ satisfies the Ricatti equation
	$$A'+ A^2=0,\qquad A|_{t=0}=Dv_0.$$
	{}From Hypothesis $\mathbf{(H0)},$
	one can deduce that $v(t,y)$ is defined for all $t\geq0$ and $y\in\R^3,$ and that 
	$$Dv(t,X(t,y))= (\Id+t Dv_0(y))^{-1} Dv_0(y)\with  X(t,y)=y+tv_0(y).$$
	
	Therefore, denoting $X_t:y\mapsto X(t,y),$ we get \eqref{Du}, that is
	\begin{align}
		\label{eq:Dv}
		Dv(t,x)=\frac{\Id}{1+t}+\frac{K(t,x)}{(1+t)^2}
	\end{align}
	where $K(t,x):=(1+t)(\Id+tDv_0(X_t^{-1}(x)))^{-1}(Dv_0(X_t^{-1}(x))-\Id)$. From this, we can also find the divergence of the velocity field $v$ by taking the trace of the tensor $Dv$
	\begin{equation}\label{eq:divv}
		\div v(t,y+tv_0(y))=\frac d{1+t}+\frac{{\rm Tr}\,K(t,y+tv_0(y))}{(1+t)^2}\cdotp
	\end{equation}
	Furthermore,   Hypothesis $\mathbf{(H0)}$ implies that
	\begin{equation}\label{eq:DXbounded}
		\|(\Id + tDv_0)^{-1}\|_{L^\infty}\lesssim (1+\ep t)^{-1},\end{equation}
	and  $K$ is thus bounded on $\R_+\times\R^3.$
	\medbreak
	2. For the proof of \eqref{estim3} we refer to \cite{G2}. 
	
	3. We proceed with the proof of \eqref{estim2} in the case $\sigma\in]0,1[$. The case of higher order regularity exponents may be done by  taking advantage of the explicit formula for partial derivatives of $\wt K_t$ that has been derived by M.~Grassin in \cite[p. 1404]{G2}. The same method as in the case $\sigma\in]0,1[$, which we shall shortly show, has then to be applied to each term of the formula. The details are left to the reader. 
	
	
	To bound $\wt K_t:=(1+t)^{-1} K(t,\cdot)$ in $\dot H^\sigma,$ 
	we use the following characterization of Sobolev norms by finite differences:
	$$
	\|\wt K_t\|_{\dot H^\sigma}^2 \approx \int_{\R^3}\!\!\int_{\R^3}
	\frac{|\wt K_t(y)-\wt K_t(x)|^2}{{|y-x|}^{3+2\sigma}}\,dx\,dy.
	$$
	We can thus write $ \wt K_t(y)-\wt K_t(x)= I^1_t(x,y)+I^2_t(x,y)$ where
	$$\begin{aligned}
		I_t^1(x,y)&= (\Id\!+\!tDv_0(X_t^{-1}(y)))^{-1}\bigl(Dv_0(X_t^{-1}(y))-Dv_0(X_t^{-1}(x))\bigr),\\
		I_t^2(x,y)&=t(\Id\!+\!tDv_0(X_t^{-1}(y)))^{-1}\!\bigl(Dv_0(X_t^{-1}(x))-
		\!Dv_0(X_t^{-1}(y))\bigr) \\ &\qquad \times (\Id\!+\!tDv_0(X_t^{-1}(x)))^{-1} (Dv_0(X_t^{-1}(x))-\Id).\end{aligned}$$ 
	We see, thanks to \eqref{eq:DXbounded} and to the
	change of variable $x'=X_t^{-1}(x)$ and $y'=X_t^{-1}(y),$ that  
	$$
	\int_{\R^3}\!\!\int_{\R^3}
	\frac{|I_t^1(x,y)|^2}{{|y-x|}^{3+2\sigma}}\,dx\,dy\leq\frac C{(1+\ep t)^2}
	\int_{\R^3}\!\!\int_{\R^3}\frac{|Dv_0(y')-Dv_0(x')|^2}{|X_t(y')-X_t(x')|^{3+2\sigma}}J_{X_t}(x') J_{X_t}(y') \,dx'\,dy'.
	$$
	Furthermore, we infer from \eqref{eq:DXbounded} that
	\begin{align}\label{lagrangegap}
		\begin{aligned}
			&|y'-x'|=|X_t^{-1}(X_t(y'))-X_t^{-1}(X_t(x'))|\\
			&\leq \|DX_t^{-1}\|_{L^\infty} |X_t(y')-X_t(x')| \leq \frac C{1+\ep t}  |X_t(y')-X_t(x')|.
		\end{aligned}
	\end{align}
	Therefore, using the fact that $\|J_{X_t}\|_{L^\infty}\leq C(1+\ep t)^3$ and \eqref{lagrangegap}, we get
	\begin{equation}\label{eq:It1}
		\int_{\R^3}\!\!\int_{\R^3}
		\frac{|I_t^1(x,y)|^2}{{|y-x|}^{3+2\sigma}}\,dx\,dy\leq  C(1+\ep t)^{1-2\sigma}
		\int_{\R^3}\!\!\int_{\R^3}
		\frac{|Dv_0(y')-Dv_0(x')|^2}{{|y'-x'|}^{3+2\sigma}}\,dx\,dy.\end{equation}
	Similarly, \eqref{eq:DXbounded} and the change of variable $x'=X_t^{-1}(x)$ and $y'=X_t^{-1}(y)$
	imply that 
	$$
	\int_{\R^3}\!\!\int_{\R^3}
	\frac{|I_t^2(x,y)|^2}{{|y-x|}^{d+2\sigma}}\,dx\,dy\leq\frac{Ct^2}{(1+\ep t)^4}
	\int_{\R^3}\!\!\int_{\R^3}\frac{|Dv_0(y')-Dv_0(x')|^2}{|X_t(y')-X_t(x')|^{3+2\sigma}}
	J_{X_t}(x') J_{X_t(y')} \,dx'\,dy',
	$$
	and we thus also have \eqref{eq:It1} for $I_t^2.$  As a conclusion, using
	the characterization of $\|Dv_0\|_{\dot H^\sigma}$ by finite difference, we get
	$$\|\wt K_t\|_{\dot H^\sigma}\leq C(1+\ep t)^{\frac 12-\sigma} \|Dv_0\|_{\dot H^\sigma},
	$$
	which gives the desired estimate for $\sigma\in]0,1[.$ 
	\qed

	
	\subsection{Sobolev estimates for System {\eqref{eq:MP}}}\label{ss:sob}

	
	Let  $(v,\overline E, \overline B),$ be the solution of the Burgers-Maxwell system given by Proposition \ref{p:Burgers}
	and Lemma \ref{EMfree}. 
	Consider  a sufficiently smooth solution  $(\rho,u,E,B)$ of \eqref{eq:MP} on $[0,T]\times\R^3,$ 
	and  set   $w:=u-v$, $e:=E-\overline E$ and $b:=B-\overline B.$ Then, $(\rho,w,e,b)$ satisfies:
	\begin{equation}
		\label{eq:BB}
		\left\{
		\begin{array}{l} 
			(\d_t+w\cdot\nabla)\rho+\frac{\gamma-1}2\rho\,\div w+v\cdot\nabla\rho+\frac{\gamma-1}2\rho\,\div v=0,\\[1.5ex]
			(\d_t+w\cdot\nabla)w+\frac{\gamma-1}2\rho\,\nabla\rho+v\cdot\nabla w+w\cdot\nabla v\\[1.5ex]
			=-E-u\times B+v\times\Curl v
			,\\[1.5ex]
			\d_t e-\Curl b=\left( \frac{\gamma-1}{2\sqrt{A\gamma}} \right)^{\frac{2}{\gamma-1}}\rho^{\frac2{\gamma-1}}u-\alpha_1 e,\\[1.5ex]
			\d_t b+\Curl e=-\alpha_2 b.
		\end{array}
		\right.
	\end{equation}
	Of course, we have the compatibility conditions 
	\begin{equation}
		\label{eq:compatibilty}
		\Div e=-\tilde\vr\andf \Div b=0.
	\end{equation}
	Our aim is to prove decay estimates in $\dot H^\sigma$ for \eqref{eq:BB}  for all $0\leq\sigma\leq s.$
	Clearly, arguing by interpolation, it suffices to consider the border cases $\sigma=0$ and $\sigma=s.$
	\medbreak
	1. Let us start with  $\sigma=0.$ Taking the $L^2$ 
	scalar product of the first equation of \eqref{eq:BB} with $\rho$ gives
	\begin{align}\label{eq:L20}
		\begin{aligned}
			\frac{1}{2}&\frac d{dt}\|\rho\|_{L^2}^2
			-\frac12\int_{\R^3}\rho^2\div w\,dx
			+\frac{\gamma-1}2\int_{\R^3}\rho^2\div w\,dx\\
			& -\frac12\int_{\R^3}\rho^2\div v\,dx + \frac{\gamma-1}2 \int_{\R^3} \rho^2 \div v\, dx=0.
		\end{aligned}
	\end{align}
	
	In order to bound the magnetic term, we first remark, as observed by Germain and Masmoudi in \cite{GM} that the quantity  $Z:=B-\Curl u$  (seen as a vector-field)
	is conserved by the flow of $u$.
	Therefore, according to  Hypothesis $({\bf H 4})$, we have 
	\begin{equation}
		\label{eq:Bw}
		B=\Curl u.
	\end{equation}
	Using this observation, we see that the Lorentz contribution in the right hand side of the momentum equation --- after taking $L^2$ product with $w$ --- reduces to
	$$-\int_{\R^3}( E+v\times \Curl w )\cdot w\ dx.$$
	
	Taking now the $L^2$ scalar product of the second equation of \eqref{eq:BB} with $w$ and using (\ref{eq:Bw}) gives
	$$\frac12\frac d{dt}\|w\|_{L^2}^2
	-\frac{\gamma-1}4\int_{\R^3}\rho^2\div w\,dx
	-\frac12\int_{\R^3}|w|^2\div w\,dx
	+\int_{\R^3}w\ Dv\ w\,dx
	$$
	$$
	=-\int_{\R^3} E\cdot w\ dx
	+\frac12\int_{\R^3}|w|^2\div w\,dx.
	-\int_{\R^3}w(v\times\Curl w )\ dx.
	$$
	Using the identity $\frac{1}{2}\nabla w^2=w\times\Curl w + w\cdot\nabla w$ we further get
	\begin{align}\label{eq:L20bis}
		\begin{aligned}
			\frac12\frac d{dt}&\|w\|_{L^2}^2 - \frac{\gamma-1}4\int_{\R^3}\rho^2\div w\,dx - \frac12\int_{\R^3}|w|^2\div w\,dx + \frac12\int_{\R^3}w\ Dv\ w\,dx\\
			&=-\int_{\R^3} E\cdot w\ dx - \frac12\int_{\R^3}|w|^2\div v\,dx.
		\end{aligned}
	\end{align}
	In the same stroke, taking the $L^2$ scalar product of the last two equations of  \eqref{eq:BB}  with $(e,b)$ gives
	\begin{equation}
		\label{eq:L20ter}
		\frac12\frac d{dt}\left(\|e\|_{L^2}^2+\|b\|_{L^2}^2\right)+\alpha_1\|e\|_{L^2}^2+\alpha_2\|b\|_{L^2}^2 
		=\left( \frac{\gamma-1}{2\sqrt{A\gamma}} \right)^{\frac{2}{\gamma-1}}\int_{\R^3}\rho^{\frac2{\gamma-1}}e\cdot u\ dx.
	\end{equation}
	Let us compute several contributions in (\ref{eq:L20bis}) and (\ref{eq:L20ter}). From \eqref{Du} and \eqref{eq:divv}, one gets
	$$
	\int_{\R^3}w\ Dv\ w\,dx
	=\frac{1}{1+t}\ \| w\|_{L^2}^2
	+\frac{1}{(1+t)^2}\int_{\R^3} w(x)K(t,x) w(x)\ dx,$$
	and
	$$\int_{\R^3}|w|^2\div v\,dx
	=\frac{3}{1+t}\ \|w\|_{L^2}^2
	+\frac{1}{(1+t)^2}\int_{\R^3}TrK(t,x) |w|^2\ dx.$$
	Secondly, 
	$$
	\left|\int_{\R^3} E\cdot w\ dx\right|
	\leq
	\left(\|e\|_{L^2}+\|\overline{E}\|_{L^2}\right)\|w\|_{L^2}
	,$$
	and finally
	\begin{align*}
		\left|\int_{\R^3}\rho^{\frac2{\gamma-1}}e\cdot u\ dx\right| \lesssim  \|\rho^{\frac{2}{\gamma-1}}\|_{L^\infty}\|e\|_{L^2}\left( \|w\|_{L^2} + \|v\|_{L^2} \right)
	\end{align*}
	Let us set now
	\begin{equation}
		\label{eq:cdg}
		c_{\gamma}:= \min\left\{1,3\frac{\gamma-1}{2}\right\}-\frac{3}{2}\cdotp
	\end{equation}
	{}From \eqref{eq:L20}, Cauchy-Schwarz inequality and Proposition \ref{p:Burgers}, we deduce that, denoting by $M$ a bound of $K,$ 
	$$\frac12\frac d{dt}\|(\rho,w,e,b)\|_{L^2}^2
	+\frac{c_{\gamma}}{1+t}\|(\rho,w,e,b)\|_{L^2}^2$$
	$$
	\lesssim  \frac{M\max\left\{5/2,3/2|\gamma-2|\right\}}{(1+t)^2}\|(\rho,w,e,b)\|_{L^2}^2
	+\|\div w\|_{L^\infty}\|(\rho,w,e,b)\|_{L^2}^2
	$$
	\begin{equation}
		+C |\rho^{\frac2{\gamma-1}}\|_{L^\infty}\|e\|_{L^2}(\|w\|_{L^2} + \|v\|_{L^2})
		+\left(\|e\|_{L^2}+\|\overline{E}\|_{L^2}\right)\|w\|_{L^2}.
		\label{eq:L2}
	\end{equation}
	\medbreak
	2. The case  $\sigma>0.$ In the sequel, we will use freely the following estimates proved in \cite{BDD,bddn} (see related results in \cite{KP}\cite{KPV}), which we shall also refer to sometimes as Kato-Ponce estimates. 
	
	\begin{lem}
		\label{l:com1} 
		\begin{itemize}
			\item If $s>0,$ then we have:
			$$\|[v,\dot\Lambda^s]u\|_{L^2}\lesssim
			\|v\|_{\dot H^s}\|u\|_{L^\infty}+\|\nabla v\|_{L^\infty}\|u\|_{\dot H^{s-1}}.$$
			\item If $s>1,$ then we have:
			$$\|[v,\dot\Lambda^s]u-s\nabla v\cdot\dot\Lambda^{s-2}\nabla u\|_{L^2}\lesssim
			\|v\|_{\dot H^s}\|u\|_{L^\infty}+\|\nabla^2v\|_{L^\infty}\|u\|_{\dot H^{s-2}}.$$
		\end{itemize}
	\end{lem}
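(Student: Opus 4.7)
The natural route is via Littlewood--Paley theory and Bony's paraproduct decomposition $vu=T_v u+T_u v+R(v,u)$, with $T_v u:=\sum_j S_{j-1}v\,\Delta_j u$ and $R(v,u):=\sum_{|j-k|\le 1}\Delta_j v\,\Delta_k u$. The commutator is then rewritten as
\begin{align*}
[v,\dot\Lambda^s]u = [T_v,\dot\Lambda^s]u + \bigl(\dot\Lambda^s T_u v - T_{\dot\Lambda^s u} v\bigr) + \bigl(\dot\Lambda^s R(v,u) - R(v,\dot\Lambda^s u)\bigr),
\end{align*}
so that each piece can be estimated according to where the derivatives fall.

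For the first bound ($s>0$), I would treat $[T_v,\dot\Lambda^s]u$ dyadically: on each frequency annulus $|\xi|\sim 2^j$, expand the symbol $|\xi|^s$ to first order around the centre of support of $\widehat{\Delta_j u}$. This yields a gain of one derivative on $v$ and a loss of one on $u$, producing an $\|\nabla v\|_{L^\infty}\|u\|_{\dot H^{s-1}}$ contribution after an almost-orthogonality sum in $j$. In the two remaining brackets, the operator $\dot\Lambda^s$ acts on the low-frequency factor (for $T_u v$) or on comparable frequencies (for $R(v,u)$), so Bernstein's inequality absorbs the $s$-derivative onto $v$, producing an $\|u\|_{L^\infty}\|v\|_{\dot H^s}$ bound.

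For the refined estimate ($s>1$), I would push the Taylor expansion of the symbol to second order. Using the Fourier representation
\begin{align*}
[v,\dot\Lambda^s]u(x) = (2\pi)^{-3}\!\!\int\!\!\int e^{ix\cdot\xi}\bigl(|\xi|^s-|\xi-\eta|^s\bigr)\widehat v(\eta)\,\widehat u(\xi-\eta)\,d\eta\,d\xi,
\end{align*}
I would split the domain into the paraproduct region $|\eta|\ll|\xi-\eta|$ and its complement. In the paraproduct region, expand
\begin{align*}
|\xi|^s - |\xi-\eta|^s = s\,|\xi-\eta|^{s-2}(\xi-\eta)\cdot\eta + O\bigl(|\xi-\eta|^{s-2}|\eta|^2\bigr).
\end{align*}
The principal part exactly reproduces $s\,\nabla v\cdot\dot\Lambda^{s-2}\nabla u$ (modulo paraproduct-type corrections of the same order as the remainder), while the quadratic error is controlled by $\|\nabla^2 v\|_{L^\infty}\|u\|_{\dot H^{s-2}}$. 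Outside the paraproduct region, where $|\eta|\gtrsim|\xi-\eta|$, the estimate is coarse and falls under $\|v\|_{\dot H^s}\|u\|_{L^\infty}$.

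The main obstacle is the refined bound: one must carry the Taylor remainder uniformly across dyadic scales while extracting the correct leading term and checking that the low-low, high-low and high-high contributions all fit into the two advertised norms. The condition $s>1$ is used precisely to ensure that $\dot\Lambda^{s-2}\nabla u$ is well defined on the high-frequency modes and that the dyadic sum of second-order remainders converges. The full technical details are carried out in \cite{BDD,bddn}, which we invoke.
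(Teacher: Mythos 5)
The paper offers no proof of this lemma: it simply states the two estimates and refers to \cite{BDD,bddn} (the appendix announced in the introduction is not present in the source). Your sketch is a faithful outline of exactly the argument used in those references --- Bony decomposition, the first/second-order symbol expansion on the paraproduct term $[T_v,\dot\Lambda^s]u$, and coarse bounds on the remaining paraproduct and remainder pieces --- so it is essentially the same approach as the paper's (delegated) proof, and you correctly identify that $s>1$ is what lets Bernstein absorb $\dot\Lambda^{s-2}\nabla$ on low frequencies of $u$. The only blemish is a sign inconsistency in your decomposition: with $[v,\dot\Lambda^s]u=v\dot\Lambda^s u-\dot\Lambda^s(vu)$ the second and third groups should read $T_{\dot\Lambda^s u}v-\dot\Lambda^s T_u v$ and $R(v,\dot\Lambda^s u)-\dot\Lambda^s R(v,u)$ (your Fourier kernel has the matching opposite sign); this is immaterial for the norm estimates.
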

	\medbreak

	In order to prove Sobolev estimates, we introduce  the homogeneous fractional differential operator 
	$\dot\Lambda^s$  defined by  $\cF(\dot\Lambda^sf)(\xi):=|\xi|^s\cF f(\xi)$ and observe that  
	$\rho_s:=\dot\Lambda^s\rho,$ $w_s:=\dot\Lambda^sw$, $E_s:=\dot\Lambda^sE$, $e_s:=\dot\Lambda^se$ and $b_s:=\dot\Lambda^sb$ satisfy
	(with the usual summation convention over repeated indices)
	\begin{align}
		(\d_t+w\cdot\nabla)\rho_s+\frac{\gamma-1}2\rho\,\div &w_s+v\cdot\nabla\rho_s-s\d_jv^k\dot\Lambda^{-2}\d^2_{jk}\rho_s
		+\frac{\gamma-1}2\dot\Lambda^s(\rho\,\div v)\nonumber\\
		&=\dot R^1_s+\dot R^2_s+\dot R^3_s,\label{eq:BBs1}
	\end{align}
	\begin{align}
		(\d_t&+w\cdot\nabla)w_s+\frac{\gamma-1}2\rho\nabla\rho_s+v\cdot\nabla w_s-s\d_jv^k\dot\Lambda^{-2}\d^2_{jk}w_s+\dot\Lambda^s(w\cdot\nabla v),\nonumber\\
		&+E_s + w_s\times \Curl w + v_s\times\Curl w + w_s\times\Curl v =\dot R^4_s+\dot R^5_s+\dot R^6_s+\dot R^7_s,\label{eq:BBs2}
	\end{align}
	\begin{equation}
		\label{eq:BBs3}
		\d_t e_s-\Curl b_s+\alpha_1 e_s=\dot R^8_s,
	\end{equation}
	\begin{equation}
		\label{eq:BBs4}
		\d_t b_s+\Curl e_s+\alpha_2 b_s=0,
	\end{equation}
	with 
	$$\begin{array}{ll}
		\dot R^1_s:=[w,\dot\Lambda^s]\nabla\rho,\quad & \dot R^5_s:=\frac{\gamma-1}2[\rho,\dot\Lambda^s]\nabla\rho,\\[1ex]
		\dot R^2_s:= \frac{\gamma-1}2[\rho,\dot\Lambda^s]\div w,\quad&\dot R^6_s:=[v,\dot\Lambda^s]\nabla w-s\d_jv^k\dot\Lambda^{-2}\d^2_{jk}w_s,\\[1ex]
		\dot R^3_s:=[v,\dot\Lambda^s]\nabla\rho-s\d_jv^k\dot\Lambda^{-2}\d^2_{jk}\rho_s,\quad&\dot R^7_s:=\dot\Lambda^s(v\times \Curl v-u\times \Curl u) 
		\\[1ex]
		&\qquad\quad - v_s\times\Curl v + u_s\times \Curl u,\\[1ex]
		\dot R^4_s:=[w,\dot\Lambda^s]\nabla w,\quad&
		\dot R^8_s:=\left( \frac{\gamma-1}{2\sqrt{A\gamma}} \right)^{\frac{2}{\gamma-1}}\dot\Lambda^s(\rho^{\frac2{\gamma-1}}u).
	\end{array}
	$$
	The definitions of $\dot R^3_s$ and $\dot R^6_s$ are motivated
	by the fact that, according to  the classical  theory of pseudo-differential operators, we  expect to have 
	$$
	[\dot \Lambda^s,v]\cdot\nabla z=\frac1i\bigl\{|\xi|^s,v(x)\bigr\}(D)\nabla z +\hbox{remainder}.
	$$
	Computing  the \emph{Poisson bracket}  in the right-hand side  yields  
	$$
	\frac1i\bigl\{|\xi|^s,v(x)\bigr\}(D) = -s \d_j v \dot\Lambda^{s-2} \d_j.
	$$

	Now, taking advantage of \eqref{eq:Dv}, we get 
	$$-\d_jv^k\dot\Lambda^{-2}\d^2_{jk}z=\frac1{1+t}z-\frac{K_{kj}}{(1+t)^2}\dot\Lambda^{-2}\d^2_{jk}z,
	$$
	and using \eqref{eq:divv} yields
	$$\begin{aligned}
		\dot\Lambda^s(\rho\,\div v)&=\frac d{1+t}\rho_s+\frac1{(1+t)^2}\dot\Lambda^s(\rho\,{\rm Tr}\, K)\\
		\andf \dot \Lambda^s(w\cdot\nabla v)&=\frac1{1+t}w_s+\frac1{(1+t)^2}\dot\Lambda^s(K\cdot w).\end{aligned}$$

	Hence, taking the $L^2$ inner product of \eqref{eq:BBs1} \eqref{eq:BBs2} \eqref{eq:BBs3} and \eqref{eq:BBs4} respectively with $(\rho_s,w_s),$ denoting 
	$c_{\gamma,s}:= c_{\gamma}+s$, and using the fact that  $\|E_s\cdot w_s\|_{L^1}
	\leq \left(\|e_s\|_{L^2}+\|\overline{E}_s\|_{L^2}\right)\|w_s\|_{L^2},$ we end up with  
	\begin{align}\label{eq:Hs}
		\begin{aligned}
			&\frac12\frac d{dt}\|(\rho_s,w_s,e_s,b_s)\|_{L^2}^2+\frac{c_{\gamma,s}}{1+t}\|(\rho_s,w_s,e_s,b_s)\|_{L^2}^2 + \alpha(\|e_s\|_{L^2}^2+\|b_s\|_{L^2}^2) \\
			&\lesssim \frac{\gamma-1}2\|\nabla \rho\|_{L^\infty}\|\rho_s\|_{L^2}\|w_s\|_{L^2} + \frac{sM}{(1+t)^2} \|(\rho_s,w_s,e_s,b_s)\|_{L^2}^2\\
			&\quad +\frac{1}{(1+t)^2}\left(\frac{\gamma-1}2\|\dot\Lambda^s(\rho{\rm Tr}\, K)\|_{L^2}\|\rho_s\|_{L^2} + \|\dot\Lambda^s(K\cdot w)\|_{L^2}\|w_s\|_{L^2}\right)\\
			&\quad +\left(\|e_s\|_{L^2}+\|\overline{E}_s\|_{L^2} + \|v\|_{\dot H^s}\|\Curl w\|_{L^\infty}\right)\|w_s\|_{L^2} +  \sum_{j=1}^3\|\dot R_s^j\|_{L^2}\|\rho_s\|_{L^2}\\
			&\quad + \sum_{j=4}^7\|\dot R_s^j\|_{L^2}\|w_s\|_{L^2} + \|\dot R_s^8\|_{L^2}\|e_s\|_{L^2}.
		\end{aligned}
	\end{align}

	The  terms $\dot R_s^1,$ $\dot R_s^2,$ $\dot R_s^4,$ $\dot R_s^5$ and $\dot R_s^7$  
	may be treated according to Lemma \ref{l:com1} which gives us
	$$\begin{aligned}
		\|\dot R^1_s\|_{L^2}&\lesssim \|\nabla\rho\|_{L^\infty}\|\nabla w\|_{\dot H^{s-1}}+\|\nabla w\|_{L^\infty}\|\rho\|_{\dot H^{s}},\\
		\|\dot R_s^4\|_{L^2}&\lesssim \|\nabla w\|_{L^\infty}\|w\|_{\dot H^{s}},\\
		\|\dot R_s^2\|_{L^2}&\lesssim \|\div w\|_{L^\infty}\|\rho\|_{\dot H^{s}}+\|\nabla \rho\|_{L^\infty}\|\div w\|_{\dot H^{s-1}},\\
		\|\dot R_s^5\|_{L^2}&\lesssim \|\nabla\rho\|_{L^\infty}\|\rho\|_{\dot H^{s}},\\
	\end{aligned}$$
	The (more involved) terms $\dot R_s^3$ and $\dot R_s^6$  may be also handled thanks to Lemma~\ref{l:com1}.
	We get
	$$\begin{aligned}
		\|\dot R_s^3\|_{L^2}
		&\lesssim \|\nabla\rho\|_{L^\infty}\|v\|_{\dot H^{s}}+\|\nabla^2v\|_{L^\infty}\|\nabla\rho\|_{\dot H^{s-2}},\\
		\|\dot R_s^6\|_{L^2}&\lesssim \|\nabla w\|_{L^\infty}\|v\|_{\dot H^{s}}+\|\nabla^2v\|_{L^\infty}\|\nabla w\|_{\dot H^{s-2}}.
	\end{aligned}$$
	We see also that
	$$\|\dot R_s^7\|_{L^2}\lesssim 
	\|\dot\Lambda^s( \Curl w\times w)- \Curl w\times w_s\|_{L^2}
	+\|\dot\Lambda^s( \Curl v\times w)- \Curl v\times w_s\|_{L^2}
	$$
	$$
	+\|\dot\Lambda^s( \Curl w\times v)- \Curl w\times v_s\|_{L^2}.
	$$
	Therefore, Lemma \ref{l:com1} gives us 
	\begin{align*}
		\|\dot R_s^7\|_{L^2}
		\lesssim 
		\|\nabla v\|_{L^\infty}\|\Curl w\|_{\dot H^{s-1}}
		+\|\Curl w\|_{L^\infty}\|v\|_{\dot H^{s}}+\|\nabla w\|_{L^\infty}\|\Curl v\|_{\dot H^{s-1}}\\
		+\|\Curl v\|_{L^\infty}\|w\|_{\dot H^{s}}+\|\nabla w\|_{L^\infty}\|\Curl w\|_{\dot H^{s-1}}+\|\Curl w\|_{L^\infty}\|w\|_{\dot H^{s}}.
	\end{align*}
	
	To bound $\dot R^8_s$ we first observe that --- due to \cite[Lemma A.2]{bddn} ---
	\begin{align}
		\label{eq:Inegs}
		\left\|\rho^{\frac{2}{\gamma-1}}\right\|_{\dot H^{\sigma}}
		\lesssim 
		\left\|\rho\right\|_{L^\infty}^{\frac{2}{\gamma-1}-1}\|\rho\|_{\dot H^{\sigma}} \quad\text{ for }\quad 0< \sigma < \frac{1}{2} + \frac{2}{\gamma-1}.
	\end{align} 
	Using Kato-Ponce estimate we get now
	$$
	\left\|[v,\dot\Lambda^s]\rho^{\frac{2}{\gamma-1}} \right\|_{L^2}
	\lesssim 
	\|\nabla v\|_{L^\infty}
	\left\| \rho^{\frac{2}{\gamma-1}}\right\|_{\dot H^{s-1}}
	+\left\|\rho^{\frac{2}{\gamma-1}}\right\|_{L^\infty} \|v\|_{\dot H^{s}}.
	$$
	Therefore
	\begin{align*}
		&\left\|\dot \Lambda^s\left(\rho^{\frac{2}{\gamma-1}}v\right)\right\|_{L^2}
		\lesssim \left\|[v,\dot\Lambda^s]\rho^{\frac{2}{\gamma-1}} \right\|_{L^2} + \left\|v \dot\Lambda^s\left( \rho^{\frac{2}{\gamma-1}}\right)\right\|_{L^2}\\
		&\lesssim 
		\|\nabla v\|_{L^\infty}
		\left\| \rho^{\frac{2}{\gamma-1}}\right\|_{\dot H^{s-1}}
		+ \left\|\rho^{\frac{2}{\gamma-1}}\right\|_{L^\infty} \|v\|_{\dot H^{s}}
		+\|v\|_{L^\infty} \left\| \rho^{\frac{2}{\gamma-1}}\right\|_{\dot H^{s}}
	\end{align*}

	
	Analogously
	\begin{align*}
		&\left\|\dot \Lambda^s\left(\rho^{\frac{2}{\gamma-1}}w\right)\right\|_{L^2}
		\lesssim 
		\left\|[w,\dot\Lambda^s]\rho^{\frac{2}{\gamma-1}} \right\|_{L^2} + \left\|w \dot\Lambda^s\left( \rho^{\frac{2}{\gamma-1}}\right)\right\|_{L^2}\\
		&\lesssim 
		\|\nabla w\|_{L^\infty}
		\left\| \rho^{\frac{2}{\gamma-1}}\right\|_{\dot H^{s-1}}
		+ \left\|\rho^{\frac{2}{\gamma-1}}\right\|_{L^\infty} \|w\|_{\dot H^{s}}
		+\|w\|_{L^\infty} \left\| \rho^{\frac{2}{\gamma-1}}\right\|_{\dot H^{s}}.
	\end{align*}
	Therefore, with reinforcement of \eqref{eq:Inegs}, we have
	\begin{align*}
		&\|\dot R^8_s\|_{L^2}
		\lesssim 
		\|\nabla v\|_{L^\infty}
		\left\| \rho\right\|_{L^\infty}^{\frac{2}{\gamma-1}-1}\left\| \rho\right\|_{\dot H^{s-1}}
		+ \left\|\rho\right\|_{L^\infty}^{\frac{2}{\gamma-1}} \|v\|_{\dot H^{s}}
		+\|v\|_{L^\infty} \left\| \rho\right\|_{L^\infty}^{\frac{2}{\gamma-1}-1}\left\| \rho\right\|_{\dot H^{s}}\\
		&+\|\nabla w\|_{L^\infty}
		\left\| \rho\right\|_{L^\infty}^{\frac{2}{\gamma-1}-1}\left\| \rho\right\|_{\dot H^{s-1}}
		+ \left\|\rho\right\|_{L^\infty}^{\frac{2}{\gamma-1}} \|w\|_{\dot H^{s}}
		+\|w\|_{L^\infty} \left\| \rho\right\|_{L^\infty}^{\frac{2}{\gamma-1}-1}\left\| \rho\right\|_{\dot H^{s}}.
	\end{align*}

	
	Using similar arguments as above we get the following estimes
	$$\begin{aligned}
		\|\dot\Lambda^s(\rho{\rm Tr}\, K)\|_{L^2}
		&\lesssim\|\nabla\rho\|_{L^\infty}\|{\rm Tr}\, K\|_{\dot H^{s-1}}+\|{\rm Tr}\, K\|_{L^\infty}\|\rho\|_{\dot H^s} + \|\rho\|_{L^\infty}\|{\rm Tr}\, K\|_{\dot H^{s}}\\
		\andf \|\dot\Lambda^s(K\cdot w)\|_{L^2}&\lesssim\|\nabla w\|_{L^\infty}\|K\|_{\dot H^{s-1}}+\|K\|_{L^\infty}\|w\|_{\dot H^s} + \|w\|_{L^\infty}\|K\|_{\dot H^{s}}.
	\end{aligned}
	$$

	Plugging all the above estimates in \eqref{eq:Hs} and using Proposition \ref{p:Burgers} 
	, we end up with 
	\begin{align}\label{eq:Hs2}
		\begin{aligned}
			& \frac12\frac d{dt}\|(\rho,w,e,b)\|_{\dot H^s}^2 + \frac{c_{\gamma,s}}{1+t}\|(\rho,w,e,b)\|_{\dot H^s}^2\\
			& \lesssim \frac{1}{(1+t)^2}\|(\rho,w,e,b)\|_{\dot H^s}^2 +(\|e_s\|_{L^2}+\|\overline{E}_s\|_{L^2})\|(\rho,w,e,b)\|_{\dot H^s}\\
			& + \|(\nabla\rho,\nabla w)\|_{L^\infty}\|(\rho,w,e,b)\|_{\dot H^s}^2 + \frac{1}{(1+t)^{s+\frac{1}{2}}}\|(\nabla\rho,\nabla w)\|_{L^\infty}\|(\rho,w,e,b)\|_{\dot H^s}\\
			& + \frac{1}{(1+t)}\|(\rho,w,e,b)\|_{\dot H^s}^2 + \frac{1}{(1+t)^{s+\frac{3}{2}}}\|(\rho,w)\|_{L^\infty}\|(\rho,w,e,b)\|_{\dot H^s}\\
			& + \frac{1}{(1+t)^{s-\frac{1}{2}}}\|(\nabla\rho,\nabla w)\|_{L^\infty}\|(\rho,w,e,b)\|_{\dot H^s} + \frac{1}{(1+t)^3}\|(\rho,w)\|_{\dot H^{s-1}}\|(\rho,w,e,b)\|_{\dot H^s}\\
			& + \|\rho\|_{L^\infty}^{\frac{2}{\gamma - 1} - 1}\|(\rho,w,e,b)\|_{\dot H^s}^2 + \frac{1}{(1+t)^{s-\frac{1}{2}}}\|\rho\|_{L^\infty}^{\frac{2}{\gamma - 1} }\|(\rho,w,e,b)\|_{\dot H^s} \\
			&  + \frac{1}{(1+t)}\|\rho\|_{L^\infty}^{\frac{2}{\gamma - 1} -1 }\|\rho\|_{\dot H^s}^2 + \|\nabla w\|_{L^\infty}\|\rho\|_{L^\infty}^{\frac{2}{\gamma-1}-1}\|\rho\|_{\dot H^{s-1}}\|\rho\|_{\dot H^s}\\
			& + \|\rho\|_{L^\infty}^{\frac{2}{\gamma - 1}}\|\rho\|_{\dot H^s}\|w\|_{\dot H^s}  + \|w\|_{L^\infty} \|\rho\|_{L^\infty}^{\frac{2}{\gamma - 1}-1}\|\rho\|_{\dot H^s}^2
		\end{aligned}
	\end{align}

	Let us introduce the  notation
	$$\dot X_\sigma:=\|(\rho,w,e,b)\|_{\dot H^\sigma}\andf
	X_\sigma:=\sqrt{\dot X_0^2+\dot X_\sigma^2}\approx \|(\rho,w,e,b)\|_{H^\sigma}
	\quad\hbox{for }\ \sigma\geq0.$$
	Our aim is to bound the right-hand side of \eqref{eq:L2} and  \eqref{eq:Hs2} in terms of $\dot X_0$ and $\dot X_s$ only. 
	\vskip0.5cm
	Arguing by interpolation, we get first:
	\begin{eqnarray}\label{eq:interpo1}
		\|(\rho,w)\|_{L^\infty}&\!\!\!\lesssim\!\!\!& \dot X_0^{1-\frac 3{2s}}\dot X_s^{\frac 3{2s}},\\\label{eq:interpo2}
		\|(D\rho,Dw)\|_{L^\infty}&\!\!\!\lesssim\!\!\!& \dot X_0^{1-\frac5{2s}}\dot X_s^{\frac 5{2s}},\\\label{eq:interpo3}
		\|(\rho,w)\|_{\dot H^{s-1}}&\!\!\!\lesssim\!\!\!& \dot X_0^{\frac1s}\dot X_s^{1-\frac1s}.
	\end{eqnarray}
	Then, plugging these inequalities and those of Proposition \ref{p:Burgers} in \eqref{eq:L2} and \eqref{eq:Hs2}, together with decay properties given by Lemma \ref{EMfree} and \ref{Elecmag} yields

	$$
	\frac{d}{dt}\dot X_0
	+\frac{c_{\gamma}}{1+t}\dot X_0
	\lesssim 
	\frac{\dot X_0}{(1+t)^2}
	+\dot X_0^{2-\frac{5}{2s}} \dot X_s^{\frac{5}{2s}}
	+
	\dot X_0^{(1-\frac{3}{2s})\frac{2}{\gamma-1} + 1} 
	\dot X_s^{\frac{3}{2s}\frac{2}{\gamma-1}},
	$$

	and
	
	\begin{align*}
		\begin{aligned}
			& \frac{d}{dt}\dot X_s + \frac{c_{\gamma,s}}{1+t}\dot X_s \lesssim \frac{\dot X_s}{(1+t)^2} + \dot X_0^{1-\frac{5}{2s}} \dot X_s^\frac5{2s}\dot X_s + \frac{\dot X_0^{1-\frac{5}{2s}}\dot X_s^{\frac{5}{2s}}}{(1+t)^{s+\frac{1}{2}}} + \frac{\dot X_s}{(1+t)}\\
			& + \frac{\dot X_0^{1-\frac{3}{2s}}\dot X_s^{\frac{3}{2s}}}{(1+t)^{s+\frac{3}{2}}} + \frac{\dot X_0^{1-\frac{5}{2s}}\dot X_s^{\frac{5}{2s}}}{(1+t)^{s-\frac{1}{2}}} + \frac{\dot X_0^{\frac{1}{s}}\dot X_s^{1-\frac{1}{s}}}{(1+t)^3} + \left(\dot X_0^{1-\frac{3}{2s}}\dot X_s^{\frac{3}{2s}} \right)^{\frac{2}{\gamma - 1}-1}\dot X_s\\
			& + \frac{\big(\dot X_0^{1-\frac{3}{2s}}\dot X_s^{\frac{3}{2s}} \big)^{\frac{2}{\gamma - 1}}}{(1+t)^{s-\frac{1}{2}}} + \frac{\big( \dot X_0^{1-\frac{3}{2s}}\dot X_s^{\frac{3}{2s}} \big)^{\frac{2}{\gamma - 1} - 1}\dot X_s}{(1+t)} + \dot X_0^{1-\frac{3}{2s}}\dot X_s^{1+\frac{3}{2s}}\big( \dot X_0^{1-\frac{3}{2s}}\dot X_s^{\frac{3}{2s}} \big)^{\frac{2}{\gamma - 1} - 1} .
		\end{aligned}
	\end{align*}
	

	Let $a$ be an arbitrary positive number. Performing the change of unknown
	$$\dot Y_s=(1+t)^{c_{\gamma,s}-a}\dot X_s,$$ 
	we observe that
	$$
	\frac d{dt}\dot Y_s+\frac{a}{1+t}\dot Y_s
	=(1+t)^{c_{\gamma,s}-a}\left(\frac d{dt}\dot X_s+\frac{c_{\gamma,s}}{1+t}\dot X_s\right).
	$$
	
	Therefore introducing the notation $Y_\sigma:=\sqrt{\dot Y_0^2+\dot Y_\sigma^2}$ for any $\sigma\in[0,s]$, we get
	
	$$
	\frac d{dt} Y_s+\frac{a}{1+t} Y_s
	\leq
	C\left(
	\frac{ Y_s}{(1+t)^2}
	+\frac{ Y_s^2}{(1+t)^{\Gamma_0}}
	+\frac{Y_s^{\frac{2}{\gamma-1}-1}}{(1+t)^{\Gamma_1}}
	+\frac{Y_s^{\frac{2}{\gamma-1}}}{(1+t)^{\Gamma_2}}
	\right.$$
	\begin{equation}
		\label{eq:Ineg}+\left. 
		\frac{Y_s^{\frac{2}{\gamma-1}-1}}{(1+t)^{\Gamma_3}}
		+\frac{Y_s^{\frac{2}{\gamma-1}+1}}{(1+t)^{\Gamma_4}}
		\right),
	\end{equation}
	with
	$\Gamma_0=c_{\gamma}-a+5/2 $,
	$\Gamma_1=(c_{\gamma}-a+3/2)(\frac{2}{\gamma-1}-1) $,
	$\Gamma_2= (c_{\gamma}-a+3/2)(\frac{2}{\gamma-1}-1)$,
	$\Gamma_3= (c_{\gamma}-a+3/2)(\frac{2}{\gamma-1})$ and
	$\Gamma_4= (c_{\gamma}-a+3/2)(\frac{2}{\gamma-1}-1)$.
	
	Denoting the new unknown $Z(t):=(1+t)^a e^{-\frac{Ct}{1+t}} Y_s(t)$, inequality (\ref{eq:Ineg}) gives
	\begin{align*}
		\frac d{dt}& Z \leq Ce^\frac{Ct}{1+t}\frac{ Z^2}{(1+t)^{\Gamma_0+a}}
		+Ce^\frac{(m-1)Ct}{1+t}\frac{ Z^m}{(1+t)^{\Gamma_1+am-1}}\\
		& +Ce^\frac{mCt}{1+t}\frac{ Z^{m+1}}{(1+t)^{\Gamma_2+am-1}}
		+Ce^\frac{(m+1)Ct}{1+t}\frac{ Z^{m+2}}{(1+t)^{\Gamma_4+am-1}},
	\end{align*}
	where $m=\frac{2}{\gamma-1}-1$.

	
	In order to prove that $t\to Z(t)$ is bounded for any $t$, we use the same bootstrap argument as in \cite{bddn}.
	
	Suppose indeed that $Z_0:=Z(0)$ and  assume that 
	\begin{equation}\label{eq:Z2}
		Z(t)\leq 2Z_0\quad\hbox{on}\quad [0,T].
	\end{equation}
	Then \eqref{eq:Ineg} implies that
	$$
	\frac d{dt} Z
	\leq
	Ce^{C}\frac{ (2Z_0)^2}{(1+t)^{\Gamma_0+a}}
	+Ce^{(m-1)C}\frac{ (2Z_0)^m}{(1+t)^{\Gamma_1+am-1}}$$
	$$+Ce^{mC}\frac{ (2Z_0)^{m+1}}{(1+t)^{\Gamma_2+am-1}}
	+Ce^{(m+1)C}\frac{ (2Z_0)^{m+2}}{(1+t)^{\Gamma_4+am-1}}.
	$$
	Suppose that
	\begin{equation}
		\label{eq:constraint}
		\Gamma_0+a>1,\ \Gamma_1+am-1>1,\ \Gamma_2+am-1>1\ \mbox{and}\ \Gamma_4+am-1.
	\end{equation}
	Hence, integrating in time, we discover that on $[0,T],$ we have
	$$
	Z(t)\leq 
	Z_0
	+\frac{Ce^C}{\Gamma_0+a-1}(2Z_0)^2\bigl(1-(1+t)^{1-\Gamma_0-a}\bigr)
	+\frac{Ce^{(m-1)C}(2Z_0)^{m+1}}{\Gamma_1+am-2}\bigl(1-(1+t)^{2-\Gamma_1-am}\bigr)
	$$
	$$
	+\frac{Ce^{mC}(2Z_0)^{m}}{\Gamma_1+am-2}\bigl(1-(1+t)^{2-\Gamma_2-am}\bigr)
	+\frac{Ce^{(m+1)C}(2Z_0)^{m+2}}{\Gamma_4+am-2}\bigl(1-(1+t)^{2-\Gamma_2-am}\bigr)
	$$
	Let us discard the obvious case $Z_0=0.$ Then, if  $Z_0$ is so small as to satisfy 
	\begin{equation}
		\label{eq:Bound}
		\frac{4Ce^CZ_0}{\Gamma_0+a-1}
		+\frac{Ce^{(m-1)C}2^{m+1}Z_0^{m}}{\Gamma_1+am-2}
		+\frac{Ce^{mC}2^{m}Z_0^{m-1}}{\Gamma_2+am-2}
		+\frac{Ce^{(m+1)C}2^{m+2}Z_0^{m+1}}{\Gamma_4+am-2}
		\leq 1,
	\end{equation}
	the above inequality ensures that  we actually have $Z(t)<2Z_0$ on $[0,T]$.
	
	Therefore the supremum of $T>0$ satisfying \eqref{eq:Z2} has to be infinite.
	
	Eventually we get, provided $\|(\rho_0,w_0,e_0,b_0)\|_{H^s}$ is small enough:
	\begin{equation}
		\label{eq:poissonHs}
		\sqrt{(1+t)^{2s}\|(\rho,w,e,b)\|_{\dot H^s}^2+\|(\rho,w,e,b)\|_{L^2}^2}\leq 2\frac{e^{\frac{Ct}{1+t}}}{(1+t)^{c_{\gamma}}} \|(\rho_0,w_0,e_0,b_0)\|_{H^s}.
	\end{equation}

	Let us emphasize that in order to derive (\ref{eq:Bound}), we need  to satisfy constraints (\ref{eq:constraint}).
	
	The first condition reduces to $\gamma>1$ and the remaining conditions are equivalent to
	$$
	2+\biggl(\frac2{\gamma-1}-1\biggr)\biggl(a-c_{\gamma}-\frac 32\biggr)<a\biggl(\frac2{\gamma-1}-1\biggr),
	$$
	that is to say
	$$
	\min\biggl(1,\frac32(\gamma-1)\biggr)\biggl(\frac2{\gamma-1}-1\biggr)>2.
	$$
	That latter inequality is equivalent to $\gamma<5/3.$

	Keeping in mind the previous constraint (\ref{eq:Inegs}) one can conclude that \eqref{eq:poissonHs} holds true whenever 
	\begin{equation}
		\label{eq:condP}
		1<\gamma<\frac 53\andf\frac 52<s<\frac12+\frac2{\gamma-1}\cdotp
	\end{equation}
	
	
	\section{Proving Theorem \ref{isentro}}\label{s:exist}
	
	As pointed out in the introduction, a number of works have been dedicated 
	to the well-posedness issue for the Euler-Maxwell system. 
	However, none of them considered data like ours.
	For the reader's convenience, we here sketch the proof of the global existence
	in the functional setting of Theorem \ref{isentro}, 
	then establish  uniqueness by means of a classical energy method.
	
	\subsection{Existence}

	Here we are given $(\rho_0,u_0,E_0,B_0)$ fulfilling the assumptions of Theorem \ref{isentro}. 
	Our goal is to prove the existence of a global-in-time solution $(\rho,u,E,B)$ for System \eqref{eq:MP} 
	or, equivalently, denoting $w:=u-v,$ of $(\rho,w,e,b)$ for System \eqref{eq:BB}. 
	The idea is to use the cut-off function $\chi$ with range $[0,1]$, support in the ball $B(0,2)$ and value $1$ on $B(0,1)$, and to 
	approximate \eqref{eq:BB} as follows:
	$$\left\{\begin{aligned}
		&(\d_t+u\cdot\nabla)\rho+\frac{d\wt\gamma\rho}{1+t} 
		+\wt\gamma\frac{\rho\,\Tr K_n}{(1+t)^2}
		+\wt\gamma\rho\,\div w=0,\\
		&(\d_t+u\cdot\nabla)w+\frac{w}{1+t} +\frac{w\cdot K_n}{(1+t)^2}
		+\wt\gamma\,\rho\nabla\rho
		=-\chi(n^{-1}(\rho E+J\times B)),\\
		&d_t e-\Curl b=-\chi(n^{-1}J),\\
		&d_t b+\Curl e=0,\\
		&(\rho,w,e,b)|_{t=0}=(\rho_0^n,u_0^n,e_0^n,b_0^n),
	\end{aligned}\right.$$
	where $\tilde\gamma=\frac{\gamma-1}{2}$, with $K_n:=\chi(n^{-1}\cdot)K,$  $\rho_0^n:=\chi(n^{-1}\cdot)\rho_0$, $u_0^n:=\chi(n^{-1}\cdot) u_0$,  
	$e_0^n:=\chi(n^{-1}\cdot) e_0$ and $b_0^n:=\chi(n^{-1}\cdot) b_0$.
	\medbreak
	Since the initial data as well as $K_n$ are in the Sobolev space $H^s$ with $s>1+d/2,$ 
	a tiny modification of the standard theory 
	of symmetric hyperbolic systems allows to prove that there 
	exists a unique maximal solution $(\rho^n,w^n,e^n,b^n)$ in 
	$ \cC([0,T^n);H^s)\cap \cC^1([0,T^n);H^{s-1})$ to the above system. 
	
	The computations of the previous step may be repeated on $[0,T_n)$ and one
	gets, with obvious notation, for some absolute constant $C,$
	$$Y_s^n(t) \leq C\frac{e^{\frac{Ct}{1+t}}}{(1+t)^{1+d\wt\gamma}}  Y_s(0)\quad\hbox{for all }\  0\leq t<T_n.$$
	This in particular provides a control on  $\|\nabla \rho^n,\nabla w^n,\nabla e^n,\nabla w^n\|_{L^\infty}$
	so that the classical blow-up criterion for hyperbolic systems allows to conclude that $T_n=+\infty.$
	
	{}From that point, classical functional analysis arguments allow
	to pass to the limit (up to subsequence) and to conclude
	that $(\rho^n,w^n,e^n,b^n)$ converges 
	to some solution $(\rho,w,e,b)$ of \eqref{eq:BB} corresponding to data 
	$(\rho_0,w_0,e_0,b_0).$ Of course, that solution fulfills \eqref{eq:poissonHs}, 
	and looking at the definition of $Y_s$ allows to get the required decay
	estimates.
	
	This completes the proof of the existence part of Theorem \ref{isentro}, and 
	of the decay estimates.

	
	\subsection{Uniqueness}
	
	Consider two solutions $(\rho_1,w_1,e_1,b_1)$ and $(\rho_2,w_2,e_2,b_2)$ of \eqref{eq:BB}
	corresponding to the same data and having the properties of regularity listed in Theorem \ref{isentro}.  Then,  $(\dr,\dw,\delta e,\delta b):=(\rho_2-\rho_1,w_2-w_1,e_2-e_1,b_2-b_1)$ fulfills:
	$$
	\left\{\begin{array}{l} 
		(\d_t+w_2\cdot\nabla)\dr+\wt\gamma\rho_2\,\div\dw+v\cdot\nabla\dr+\wt\gamma\dr\,\div v=
		-\dw\cdot\nabla\rho_1-\wt\gamma\dr\,\div w_1,\\[1.5ex]
		(\d_t+w_2\cdot\nabla)\dw+\wt\gamma\rho_2\,\nabla\dr+v\cdot\nabla\dw
		\\ \hspace{3cm}=-\delta\left(E+u\times B-v\times \Curl v\right),\\[1.5ex]
		\d_t \delta e-\Curl \delta b+\alpha_1 e=\delta \left(\rho^{\frac{2}{\gamma-1}}u\right),\\[1.5ex]
		\d_t \delta b+\Curl \delta e+\alpha_2 b=0.
	\end{array}
	\right.
	$$
	Hence, differentiating with respect to $x_j$ yields 
	$$\left\{
	\begin{array}{l}
		(\d_t+(v+w_2)\cdot\nabla)\d_j\dr+\wt\gamma\rho_2\,\div\d_j\dw
		=-\d_jw_2\cdot\nabla\dr-\wt\gamma\d_j\rho_2\div\dw-\d_jv\cdot\nabla\dr\\-\wt\gamma\d_j\dr\,\div v-\wt\gamma\dr\,\d_j\div v
		-\d_j\dw\cdot\nabla\rho_1-\dw\cdot\nabla\d_j\rho_1-\wt\gamma\dr\,\div\d_j w_1-\wt\gamma\d_j\dr\,\div w_1,\\[1.5ex]
		(\d_t+(v+w_2)\cdot\nabla)\d_j\dw+\wt\gamma\rho_2\,\nabla\d_j\dr
		=-\d_jw_2\cdot\nabla\dw-\wt\gamma\d_j\rho_2\nabla\dr-\d_jv\cdot\nabla\dw\\-\d_j\dw\cdot\nabla v-\dw\cdot\nabla\d_jv
		-\d_j\dw\cdot\nabla w_1-\dw\cdot\nabla\d_jw_1 -\wt\gamma\d_j\dr\nabla\rho_1 -\wt\gamma\dr\nabla\d_j\rho_1\\
		-\d_j\delta e-\d_j\delta (u\times B)+\d_j\delta (v\times \Curl v),\\[1.5ex]
		\d_t \d_j\delta e-\Curl \d_j\delta b+\alpha_1 \d_je=\d_j\delta \left(\rho^{\frac{2}{\gamma-1}}u\right),\\[1.5ex]
		\d_t \d_j\delta b+\Curl \d_j\delta e+\alpha_2\d_j b=0.
	\end{array}\right.
	$$
	Hence, applying an energy method and arguing exactly as for the proof of uniqueness  in the previous section, we get:
	$$
	\frac d{dt}\|(\nabla\dr,\nabla\dw,\nabla\delta e,\nabla\delta e)\|_{L^2}^2
	\lesssim \Bigl(\|(\nabla\rho_1,\nabla\rho_2,\nabla u_1,\nabla u_2,\nabla v,\nabla e_1,\nabla e_2,\nabla b_1,\nabla b_2)\|_{L^\infty}$$
	$$
	+ \|(\nabla^2\rho_1,\nabla^2\rho_2,\nabla^2 u_1,\nabla^2 u_2,\nabla^2v,
	\nabla^2 e_1,\nabla^2 e_2,\nabla^2 b_1,\nabla^2 b_2)\|_{L^d}\Bigr)
	$$
	$$
	\times\|(\nabla\dr,\nabla\dw,\nabla \delta e,\nabla \delta b)\|_{L^2}^2.
	$$
	Recall that $\nabla v$ is bounded and that $\nabla^2v$ is in $H^{s-1}$ with $s>1+d/2,$  and thus in $L^d.$
	
	Of course, as previously $\nabla\rho_i,\nabla w_i,\nabla e_i,\nabla b_i$ are in $L^\infty$
	and $\nabla^2\rho_i,\nabla^2 w_i,\nabla^2 w_i,\nabla^2 e_i,\nabla^2 b_i$ are in $L^d$, for $i=1,2$. 
	Hence Gronwall lemma ensures 
	that $(\nabla\dr,\nabla\dw,\nabla\delta e,\nabla\delta b)\equiv0$ on $[0,T]\times\R^3,$
	which, owing to the fact that $\dr$ is in $L^q$  eventually  implies that $\dr\equiv0.$
	Plugging that information in the equation of  $\dw,$ one can then conclude that $\dw\equiv0.$ 
	Finally one sees in the same stroke that $\delta e=\delta b=0$.
	
	This completes the proof of the theorem. \qed
	

	\subsection* {Acknowledgments:}
	The first author  warmly thanks D. Serre for a fruitful correspondence on Euler-Maxwell equations.
	\v S.N. and J.S. acknowledge the supports the Praemium Academiae of {\v{S}}. Moreover, \v S.N. acknowledge the supports of  the Czech Science Foundation (GA\v CR) through projects project 22-01591S. The Institute of Mathematics, CAS is supported by RVO:67985840. 

	\vskip.25cm
	
	\centerline{Bernard Ducomet}
	\centerline{Universit\'e Paris-Est}
	\centerline{LAMA (UMR 8050), UPEMLV, UPEC, CNRS}
	\centerline{ 61 Avenue du G\'en\'eral de Gaulle, F-94010 Cr\'eteil, France}
	\centerline{E-mail: bernard.ducomet@u-pec.fr}
	\vskip0.5cm
	\centerline{\v S\' arka Ne\v casov\'a}
	\centerline{Institute of Mathematics of the Academy of Sciences of the Czech Republic}
	\centerline{\v Zitna 25, 115 67 Praha 1, Czech Republic}
	\centerline{E-mail: matus@math.cas.cz}
	\vskip0.5cm
	\centerline{John Sebastian H. Simon}
	\centerline{Johann Radon Institute for Computation and Applied Mathematics (RICAM)}
	\centerline{Austrian Academy of Sciences}
	\centerline{Altenberger Strasse 69, 4040 Linz, Austria}
	\centerline{E-mail: john.simon@ricam.oeaw.ac.at, jhsimon1729@gmail.com}

\end{document}